\theoremstyle{plain}
\newtheorem{thm}{Theorem}[section]
\newtheorem{cor}[thm]{Corollary}
\newtheorem{lem}[thm]{Lemma}
\theoremstyle{definition}
\newtheorem{defn}{Definition}[section]
\theoremstyle{remark} \tolerance=10000 \hbadness=10000
\def \ni{\noindent}
\title{\bf The $k$- Sudoku Number of Graphs}
\author{Manju S Nair \footnote{E-mail : manjunsnair@gmail. com}\\
	Department of General Sciences\\
	Govt.  Women's Polytechnic College\\
	Ernakulam, Kerala, India.  \and Aparna Lakshmanan S. \footnote{E-mail : aparnals@cusat. ac. in, aparnaren@gmail. com}\\
	Department of Mathematics\\
	Cochin University of Science and Technology\\
	Cochin- 22, Kerala, India.  \and S Arumugam \footnote{E-mail : s. arumugam. klu@gmail. com}\\
	Department of Mathematics\\
	M S University\\
	Tirunelveli, Tamilnadu,India. 
}
\date{}
\begin{document}
	\maketitle
	\begin{abstract}
		\ni\line(1,0){360}\\
		Let $G=(V,E)$ be a graph of order $n$ with chromatic number $\chi(G)$.  Let $ k \geq \chi(G)  $ and $S \subseteq V$.  Let $ C_0 $ be a $k$-coloring of the induced subgraph $ G[S] $.  The coloring $C_0$ is called an extendable coloring, if $C_0$ can be extended to a $k$-coloring of $G$ and it is a $k$- Sudoku coloring of $G$, if $C_0$ can be uniquely extended to a $k$-coloring of $G$.   The smallest order of such an induced subgraph $G[S]$ of $G$ which admits a $k$-  Sudoku coloring is called $k$- Sudoku number of $G$ and is denoted by $sn(G,k)$.  When $k=\chi(G)$, we call $k$- Sudoku number of $G$ as Sudoku number of $G$ and is denoted by $sn(G)$.   In this paper, we have obtained the $3$- Sudoku number of some bipartite graphs $P_n$, $C_{2n}$, $K_{m,n}$, $B_{m,n}$ and $G \circ lK_1$, where $G$ is a bipartite graph and $l\geq1$.  Also, we have obtained the necessary and sufficient conditions for a bipartite graph $G$ to have $sn(G,3)$ equal to $n$, $n-1$ or $n-2$.  Also, we study the relation between $k$- Sudoku number of a graph $G$ and the Sudoku number of a supergraph $H$ of $G$. 
		\ni\line(1,0){360}\\
		\ni {\bf Keywords:} Sudoku Number, $k$- Sudoku Number\\
		
		\ni {\bf AMS Subject Classification:} {\bf Primary: 05C15, Secondary: 05C76}\\
		\ni\line(1,0){360}\\
	\end{abstract}
	\section{Introduction}
	By a graph $ G = (V,E) $, we mean a finite undirected connected simple graph of order $ n(G) = |V| $ and size $m(G) = |E| $ (if there is no ambiguity in the choice of $ G $, then we write it as $n$ and $m$, respectively).  A vertex coloring of $ G $ is a map $ f:V \rightarrow C $, where $C$ is a set of distinct colors.  It is proper if adjacent vertices of $G$ receive distinct colors of $C$, that is, if $ uv \in E(G) $, then $ f(u) \neq f(v) $.  The minimum number of colors needed for a proper vertex coloring of $G$ is called the chromatic number $ \chi(G) $, of a graph $G$. \\
	
	The concept of Sudoku coloring has been developed recently in graph coloring motivated by the famous Sudoku puzzle.  A solution to a Sudoku puzzle represents a proper vertex coloring of a graph $G$ using 9 colors with vertex set $V$ containing 81 cells where each of the 9 rows, 9 columns and nine 3 x 3 subsquares are complete graphs.  To solve the puzzle, some of the cells will be initially filled with numbers 1 to 9.  In otherwords, a proper vertex coloring $C$ of an induced subgraph $H$ of $G$ will be initially given using atmost 9 colors and that can be uniquely extended to a proper 9-coloring of $G$.  This gave the idea of Sudoku coloring of a graph $G$  \cite{maria2023sudoku}.   \\
	
	Let $ \chi(G) = k $ and $S \subset V$.  Let $ C_0 $ be a $k$-coloring of the induced subgraph $ G[S] $.  The coloring $C_0$ is called an extendable coloring, if $C_0$ can be extended to a $k$-coloring of $G$.  $C_0$ is a Sudoku coloring of $G$, if this extension is unique.  The smallest order of such an induced subgraph $G[S]$ of $G$ which admits a Sudoku coloring is called the Sudoku number of $G$ and is denoted by $sn(G)$ \cite{maria2023sudoku}. \\
	
	A bistar graph is a graph formed by joining the center vertices of two star graphs $K_{1,m}$ and $K_{1,n}$ by an edge and is denoted by $B(m,n)$.  The cycle and path on $n$ vertices are denoted by $C_n$ and $P_n$ respectively and $K_{m,n}$ represents the complete bipartite graph with partite sets of cardinality $m$ and $n$.  In particular, $K_{1,n}$ is called a star.  The corona product of two graphs $G$ and $H$, denoted by $G \circ H$, is defined as the graph obtained by taking one copy of $G$ and $|V(G)| $ copies of $H$ and joining the $i^{\text{th}}$ vertex of $G$ to every vertex in the $i^{\text{th}}$ copy of $H$.  \\
	
	Though various authors had dealt with the concept of Sudoku number under various names like critical sets, defining sets etc, only few papers are available in literature.  Interested readers may refer to  \cite{bate1999size}, \cite{cooper2014critical}, \cite{mahmoodian1997defining},  \cite{nair2024sudoku}, \cite{maria2023sudoku} and \cite{pokrovskiy2022graphs}.  In  \cite{cambie2022extremal}, Stijn Cambie has extended the notion $sn(G)$ to $k$-coloring where $ k > \chi(G) $ and denoted the new parameter by $sn(G,k)$.  Since only a few results are done on this parameter, we had done a detailed study on $sn(G,k)$ in our paper.  As the same name Sudoku number is used in  \cite{cambie2022extremal} for both the parameters $sn(G)$ and $sn(G,k)$, to avoid confusion, we call $sn(G,k)$ as $k$- Sudoku number in our paper. \\
	
	In the paper, we determined the $3$- Sudoku number of some bipartite graphs.  Also, we have obtained the necessary and sufficient conditions for a bipartite graph $G$ to have $sn(G,3)$ equal to $n$, $n-1$ or $n-2$.  Also, we study the relation between $k$- Sudoku number of a graph $G$ and the Sudoku number of a supergraph $H$ of $G$.  \\
	
	For notations and concepts not mentioned here, we refer to  \cite{balakrishnan2012textbook}\\
	\begin{defn}
		Let $G=(V,E)$ be a graph of order $n$ with chromatic number $\chi(G)$.  Let $ k \geq \chi(G)  $ and $S \subseteq V$.  Let $ C_0 $ be a $k$-coloring of the induced subgraph $ G[S] $.  The coloring $C_0$ is called an extendable coloring, if $C_0$ can be extended to a $k$-coloring of $G$ and it is a $k$- Sudoku coloring of $G$, if $C_0$ can be uniquely extended to a $k$-coloring of $G$.   The smallest order of such an induced subgraph $G[S]$ of $G$ which admits a $k$-  Sudoku coloring is called $k$- Sudoku number of $G$ and is denoted by $sn(G,k)$.  When $k=\chi(G)$, we call $k$- Sudoku number of $G$ as Sudoku number of $G$ and is denoted by $sn(G)$.  
	\end{defn}
The following lemmas and definition are useful for us. 
\begin{lem}
	Let $G$ be a graph with $\chi(G) \geq 3$.  Suppose $C_0$ is an extendable coloring of $G[S]$ for $S \subset V(G)$.  If there is a pendant vertex $v \notin S$, then $C_0$ is not a Sudoku coloring  \cite{maria2023sudoku}. 
	\label{lem1}
\end{lem}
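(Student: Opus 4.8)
The plan is to argue by contradiction, exploiting the fact that a pendant vertex is almost unconstrained in any proper coloring. Suppose, for a pendant vertex $v \notin S$, that $C_0$ \emph{were} a Sudoku coloring. By definition this means $C_0$ extends to a \emph{unique} proper $\chi(G)$-coloring $C$ of $G$. I would then show that $C$ can be modified at the single vertex $v$ to produce a second, genuinely different proper $\chi(G)$-coloring that still extends $C_0$, contradicting uniqueness.

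The key steps, in order, are as follows. First, since $v$ is a pendant vertex, it has exactly one neighbor $u$, so the only constraint that properness places on the color of $v$ is $C(v) \neq C(u)$. Second, I count the admissible colors available at $v$: among the $\chi(G)$ colors, only the single color $C(u)$ is forbidden, leaving at least $\chi(G) - 1 \geq 2$ legal choices for $v$ (this is exactly where the hypothesis $\chi(G) \geq 3$ is used). Hence there is a color $c' \notin \{C(u), C(v)\}$. Third, I define the recolored map $C'$ that agrees with $C$ on every vertex except $v$, and sets $C'(v) = c'$.

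It then remains to verify two things about $C'$. One, $C'$ is a proper $\chi(G)$-coloring of $G$: the only edges incident to $v$ join it to $u$, and $c' \neq C(u)$ preserves properness there, while every other edge is colored exactly as in the proper coloring $C$. Two, $C'$ still extends $C_0$: because $v \notin S$, altering the color of $v$ does not change the restriction of the coloring to $S$, so $C'|_S = C|_S = C_0$. Since $C' \neq C$ (they differ at $v$), we have produced two distinct proper $\chi(G)$-colorings extending $C_0$, contradicting the assumed uniqueness. Therefore $C_0$ is not a Sudoku coloring.

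The argument is essentially local and short; there is no serious computational obstacle. The one point demanding care is the counting in the second step: the conclusion relies critically on having at least two admissible colors at $v$, which fails precisely when $\chi(G) = 2$. I would therefore make explicit that $\chi(G) \geq 3$ guarantees $\chi(G) - 1 \geq 2$ free colors, so that the recoloring $c'$ always exists and the construction of a competing extension never degenerates.
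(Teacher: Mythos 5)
Your proof is correct. The paper states this lemma without proof (it is quoted from the cited reference), and your argument --- taking the unique extension $C$ and recoloring the pendant vertex $v$ with one of the at least $\chi(G)-1 \geq 2$ colors different from that of its unique neighbour to produce a second extension agreeing with $C_0$ on $S$ --- is exactly the standard argument for this fact, with the role of the hypothesis $\chi(G)\geq 3$ correctly identified.
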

\begin{lem}
	Let $G$ be a graph with $\chi(G) = k \geq 3$.  Suppose $C_0$ is an extendable coloring of $G[S]$ for $S \subset V(G)$.  If there is an edge $xy$ for which $x,y \notin S$ such that deg($x$) $\leq$ $k-1$ and deg($y$)$\leq$  $k-1$, then $C_0$ is not a Sudoku coloring  \cite{maria2023sudoku}. 
	\label{lem2} 
\end{lem}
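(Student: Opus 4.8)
The plan is to show that the promised extension of $C_0$ can never be unique, by starting from any one extension and producing a second, distinct one that recolors only the two vertices $x$ and $y$. Since $C_0$ is extendable, fix some proper $k$-coloring $C$ of $G$ that agrees with $C_0$ on $S$. Write $a = C(x)$ and $b = C(y)$; because $xy \in E(G)$ and $C$ is proper, $a \neq b$.

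First I would record the colors available at each endpoint after ignoring the single constraint coming from the edge $xy$. Let $A$ be the set of colors not appearing on $N(x) \setminus \{y\}$, and let $B$ be the set of colors not appearing on $N(y) \setminus \{x\}$. Since $y \in N(x)$, we have $|N(x) \setminus \{y\}| = \deg(x) - 1 \leq k-2$, so these vertices occupy at most $k-2$ of the $k$ colors and hence $|A| \geq 2$; by symmetry $|B| \geq 2$. Moreover $a \in A$ and $b \in B$, because $C$ is proper at $x$ and at $y$.

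The key step is a short counting argument on joint recolorings. Any pair $(a',b') \in A \times B$ with $a' \neq b'$ yields a proper $k$-coloring of $G$: recolor $x$ with $a'$ and $y$ with $b'$ and leave every other vertex as in $C$. Properness holds at $x$ since $a' \in A$, at $y$ since $b' \in B$, on the edge $xy$ since $a' \neq b'$, and on all remaining edges because they are untouched. As $x,y \notin S$, this coloring still extends $C_0$. The number of such proper pairs is $|A|\,|B| - |A \cap B|$, and assuming without loss of generality that $|A| \leq |B|$, this is at least $|A|\,|B| - |A| = |A|(|B|-1) \geq 2$. Since the original pair $(a,b)$ is itself one such proper pair, there exists at least one further proper pair $(a',b') \neq (a,b)$, giving an extension $C'$ of $C_0$ with $C' \neq C$.

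Thus $C_0$ admits two distinct $k$-colorings of $G$, so it is not a Sudoku coloring. The only delicate point is that recoloring a single endpoint in isolation can be impossible, since the other $k-1$ colors might all be forced on its neighbors; this is precisely why the argument recolors $x$ and $y$ \emph{simultaneously} and leans on the degree bound $\deg \leq k-1$ to guarantee the two-color slack $|A|,|B| \geq 2$ needed for the count to exceed one.
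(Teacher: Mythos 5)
Your proof is correct. The paper states this lemma without proof (it is quoted from the cited reference), and your argument is exactly the natural one: the degree bounds leave at least two available colors at each of $x$ and $y$ after any extension of $C_0$ to $G - \{x,y\}$, and a single edge whose two endpoints each have a list of size at least two admits at least two proper list colorings — your explicit count $|A|\,|B| - |A\cap B| \geq 2$ nails this down, and the simultaneous recoloring correctly sidesteps the pitfall that recoloring one endpoint alone may be impossible.
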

\begin{lem}
	Let $L(x_i)$ be a list of colors of a vertex $x_i$ in the path $P_n = x_1x_2\dots x_n$, $1 \leq i \leq n$.  If $|L(x_i)| \geq 2$ for each $i$, then there are at least 2 list colorings of $P_n$  \cite{maria2023sudoku}. 
	\label{lem3}
\end{lem}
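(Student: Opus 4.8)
The plan is to argue directly by a greedy sweep from the first vertex, producing two colorings that already differ at $x_1$, so that distinctness is automatic. Since $|L(x_1)| \geq 2$, I would begin by fixing two distinct colors $a, b \in L(x_1)$ and build a proper list coloring of the whole path for each of these initial choices.

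The key observation is that when the vertices are processed in the order $x_1, x_2, \dots, x_n$, every vertex $x_i$ with $i \geq 2$ has exactly one already-colored neighbor, namely $x_{i-1}$. Hence at most one color of $L(x_i)$ is forbidden at the moment $x_i$ is colored, and because $|L(x_i)| \geq 2$ there is always at least one admissible color available. I would record this as follows: given any color assigned to $x_1$, the greedy rule ``choose for $x_i$ any color in $L(x_i) \setminus \{f(x_{i-1})\}$'' never gets stuck and produces a proper list coloring of $P_n$.

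Applying this rule to the initial choice $f(x_1) = a$ yields a list coloring $C_a$, and to $f(x_1) = b$ yields a list coloring $C_b$. Since $C_a(x_1) = a \neq b = C_b(x_1)$, the two colorings are distinct, which establishes that $P_n$ admits at least two list colorings.

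I do not anticipate a serious obstacle; the only point needing care is the distinctness of the two colorings, and this comes for free since they already disagree on $x_1$, regardless of how the greedy extension resolves the remaining vertices. An alternative route is a short induction on $n$: delete $x_n$, invoke the statement for $P_{n-1}$ to obtain two colorings, and extend each to $x_n$ using the same ``at most one forbidden color'' fact; the two extensions stay distinct because their restrictions to $\{x_1, \dots, x_{n-1}\}$ already differ. Either way, the whole argument rests on the path structure imposing a single relevant constraint on each vertex during the sweep.
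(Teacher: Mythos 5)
Your argument is correct. Note that the paper itself does not prove this lemma; it is quoted from the reference \cite{maria2023sudoku}, so there is no in-paper proof to compare against. Your greedy sweep is the standard and natural argument: each $x_i$ with $i\geq 2$ sees exactly one previously colored neighbour, so at most one color of $L(x_i)$ is forbidden and the extension never gets stuck, while the two colorings are forced to be distinct because they already disagree at $x_1$. The only point worth stating explicitly is that a ``list coloring'' here means a proper coloring with each vertex colored from its own list, which is exactly what your construction delivers.
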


\begin{lem}
	Suppose there exists a list coloring of the cycle $C_n = v_1v_2 \dots v_nv_1$ such that the list of colors for each vertex $v_i$ satisfies the following conditions. 
	\begin{itemize}
		\item $|L(v_i)| \geq 2$
		\item $L(v_i) \subseteq \{1,2,3\}$
	\end{itemize} Then there are at least two list colorings of $C_n$  \cite{maria2023sudoku}. 
\label{lem4}
\end{lem}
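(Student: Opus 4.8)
The plan is to start from the colouring guaranteed by the hypothesis, call it $c$, and to manufacture a second proper list colouring $c' \neq c$. Write $L_i = L(v_i)$ and read the indices of the cycle modulo $n$. Call a vertex $v_i$ \emph{free} if $L_i \setminus \{c(v_{i-1}), c(v_{i+1})\}$ contains a colour other than $c(v_i)$. If some $v_i$ is free, then recolouring $v_i$ by that colour while leaving every other vertex untouched is again a proper list colouring, and we are done. So the first step reduces the problem to the case in which no vertex is free.

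In that \emph{rigid} case I would extract the local structure forced by $|L_i| \geq 2$ and $L_i \subseteq \{1,2,3\}$. Since $c(v_i) \in L_i \setminus \{c(v_{i-1}),c(v_{i+1})\}$, non-freeness at $v_i$ means $|L_i \setminus \{c(v_{i-1}), c(v_{i+1})\}| = 1$, and a short check splits every vertex into two types: either $c(v_{i-1}) = c(v_{i+1})$ and $L_i$ is exactly the $2$-set $\{c(v_i), c(v_{i-1})\}$ (type S), or $c(v_{i-1}) \neq c(v_{i+1})$ and $c(v_i)$ is forced to be the third colour of $\{1,2,3\}$ (type D). Reading colours in $\mathbb{Z}_3$, the increments $d_i = c(v_{i+1}) - c(v_i) \in \{+1,-1\}$ reverse sign between $d_{i-1}$ and $d_i$ precisely at the type-S vertices, so the number of type-S vertices is even; this bookkeeping is what controls how the colouring closes up around the cycle.

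With the structure in hand I would finish by cases on how many colours $c$ actually uses. If only two colours $a,b$ occur, then every vertex is type S with $L_i = \{a,b\}$, the cycle is an alternating $2$-colouring (so $n$ is even), and the global exchange $a \leftrightarrow b$ is a second proper list colouring. If all three colours occur, I would delete the edge $v_n v_1$ to obtain the path $v_1 v_2 \cdots v_n$; by Lemma~\ref{lem3} this path has at least two list colourings, and I would then account for the closing edge, arguing that not every path colouring other than $c$ can collapse the endpoints (i.e. satisfy $c'(v_1) = c'(v_n)$), which yields a genuinely different colouring of the cycle.

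I expect this last case to be the main obstacle, because it is exactly the interaction between the lists and the cyclic constraint that resists the naive moves: a single-vertex flip or a single Kempe-type exchange of two colours along one arc can be forbidden by a $2$-element list (a type-S vertex $v_i$ carries $L_i = \{c(v_i), c(v_{i-1})\}$, which refuses the remaining colour), so the second colouring may have to differ from $c$ at many vertices simultaneously. The crux is therefore to show that the closing edge $v_n v_1$ cannot destroy all but one of the path colourings supplied by Lemma~\ref{lem3}; the parity of the number of type-S vertices and the restriction $L_i \subseteq \{1,2,3\}$ are the two facts I would lean on to rule this out.
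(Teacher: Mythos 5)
The paper does not actually prove this lemma---it is quoted from \cite{maria2023sudoku}---so there is no internal proof to compare against; judged on its own terms, your argument has a genuine gap in its final case, and you have correctly identified where it is but not closed it. The free-vertex reduction, the classification of non-free vertices into types S and D, the evenness of the number of type-S vertices, and the two-colour case (global swap $a\leftrightarrow b$) are all sound. What is missing is the rigid case in which all three colours occur, and the tool you propose for it cannot work as a black box: Lemma~\ref{lem3} only guarantees that the path $v_1v_2\cdots v_n$ has \emph{at least two} list colourings, and nothing in that statement prevents the unique second path colouring from satisfying $c'(v_1)=c'(v_n)$, in which case it does not survive the closing edge. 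Concretely, take $C_3$ with $L(v_1)=\{1,2\}$, $L(v_2)=\{2,3\}$, $L(v_3)=\{1,3\}$ and $c=(1,2,3)$: this is rigid and uses all three colours, and the path $v_1v_2v_3$ admits the colourings $(1,2,1)$ and $(1,3,1)$, either of which Lemma~\ref{lem3} would be entitled to hand you, and neither of which closes up. The cycle does have a second colouring, namely $(2,3,1)$, but it differs from $c$ at every vertex and is not produced by a single-vertex flip, a rotation, or the path argument as you have set it up.

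To repair this you need either a quantitative strengthening of Lemma~\ref{lem3} (for instance, a count of the path colourings according to the colours of the two endpoints, showing that those with $c'(v_1)\neq c'(v_n)$ number at least two), or a direct construction in the rigid three-colour case---say, an induction that contracts a type-D vertex, or an explicit modification supported on an arc between consecutive type-S vertices. The parity of the type-S vertices and the containment $L(v_i)\subseteq\{1,2,3\}$, which you name as the facts you would ``lean on,'' do not by themselves yield the needed conclusion: the example above satisfies both and still defeats every local move. Since you explicitly defer the crux rather than resolve it, the proposal as written is an honest plan with a correct skeleton, but not a proof.
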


\section{$3$- Sudoku number of some bipartite graphs}
The following theorem establishes the exact value of $3$- Sudoku number of various subclasses of bipartite graphs. 
\begin{thm}
	The $3$- Sudoku number of 
	\begin{enumerate}
		\item [(i)]the path $P_n$ is $\lceil\frac{n+1}{2} \rceil$. 
		\item [(ii)]the even cycle $C_{2n}$, where $n \geq 2$, is $n$. 
		\item [(iii)]the star graph $K_{1,n}$, where $n \geq 2$, is $n$. 
		\item [(iv)]the complete bipartite graph $K_{m,n}$, where $2 \leq m \leq n$, is $m$. 
		\item [(v)]the bistar $B_{m,n}$ is $m+n$, except for $m=n=1$ and $sn(B_{1,1},3)=3$. 	
	\end{enumerate}
\label{thm2}
\end{thm}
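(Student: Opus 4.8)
The plan is to prove each of the five equalities by establishing a matching lower and upper bound for $sn(\cdot,3)$. Since all five graphs are bipartite we have $\chi=2$ but work with $k=3$ colors, so Lemmas~\ref{lem1} and~\ref{lem2} (which assume $\chi\ge 3$) do not apply directly; I will instead use the list-coloring Lemmas~\ref{lem3} and~\ref{lem4}, which hold irrespective of $\chi$, together with two elementary observations that replace Lemmas~\ref{lem1}--\ref{lem2} in the present range. First, an uncolored pendant vertex always has at least $3-1=2$ admissible colors once its unique neighbor is fixed, so it can never be forced; hence no Sudoku coloring leaves a pendant uncolored. Second, writing $U=V\setminus S$ for the uncolored set, any maximal run of two or more consecutive uncolored vertices along a path (with colored bounding vertices) has all list sizes $\ge 2$, so by Lemma~\ref{lem3} it admits at least two extensions; thus in a path or cycle $U$ must be an independent set. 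Each lower bound will come from these necessary conditions, and each upper bound from an explicit ``forcing'' coloring that pins every uncolored vertex to the unique remaining color.

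For the path $P_n=x_1\cdots x_n$ (part (i)), both endpoints lie in $S$ (pendants) and $U$ is independent and avoids the endpoints, so at most $\lceil (n-2)/2\rceil$ interior vertices can be uncolored, giving $|S|\ge\lceil (n+1)/2\rceil$; equality follows by coloring the odd-indexed vertices alternately $1,2,1,2,\dots$, which forces every even-indexed vertex to $3$. For the even cycle $C_{2n}$ (part (ii)), Lemmas~\ref{lem3} and~\ref{lem4} force $U$ to be independent, and since the maximum independent set of $C_{2n}$ has size $n$ we get $|S|\ge n$. For the construction I take one of the two alternating classes as $S$ and color it so that any two chosen vertices sharing a common uncolored neighbor get distinct colors; this is a proper coloring of the resulting auxiliary $n$-cycle (two colors if $n$ is even, three if $n$ is odd), which forces each uncolored vertex to the third color, proving $sn(C_{2n},3)=n$.

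For the star $K_{1,n}$ (part (iii)), every leaf must be colored, so $|S|\ge n$, and coloring all $n$ leaves with exactly two colors forces the center. The complete bipartite case $K_{m,n}$ (part (iv)) is the most interesting lower bound and I expect it to be the main obstacle. The key structural fact is that in any proper $3$-coloring of $K_{m,n}$ the color sets used on the two parts $X$ and $Y$ are disjoint (every $X$--$Y$ pair is adjacent). Hence, in the unique extension, any uncolored $x\in X$ has admissible set exactly $\{1,2,3\}\setminus\mathrm{colors}(Y)$; uniqueness of $x$ forces $|\mathrm{colors}(Y)|=2$, whence $X$ is monochromatic, and symmetrically an uncolored $y\in Y$ forces $Y$ monochromatic. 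These two conditions are mutually incompatible, so at most one part contains uncolored vertices; therefore $X\subseteq S$ or $Y\subseteq S$, giving $|S|\ge m$. Equality holds by coloring the smaller part $X$ with two colors, which forces all of $Y$ to the third.

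Finally, for the bistar $B_{m,n}$ with centers $u,v$ (part (v)), all $m+n$ leaves must be colored, so $|S|\ge m+n$. When $(m,n)\neq(1,1)$, at least one center, say $v$, has $\ge 2$ leaves; coloring those with two colors forces $v$, and then coloring the remaining leaves so as to force $u$ across the edge $uv$ leaves both centers uncolored yet uniquely determined, yielding $sn(B_{m,n},3)=m+n$. The delicate point here, which I will verify by a short case check, is that the forced colors on $u$ and $v$ are globally consistent and that $(1,1)$ is genuinely exceptional: in $B_{1,1}=P_4$ the two centers form an edge both of whose lists have size two, so by Lemma~\ref{lem3} they cannot both be left uncolored, forcing one extra colored vertex and giving $sn(B_{1,1},3)=3$, in agreement with part (i).
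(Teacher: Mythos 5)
Your proposal follows essentially the same strategy as the paper: lower bounds from the observations that pendant vertices must be precolored and that no two adjacent degree-two vertices can both be left uncolored (the paper phrases this as ``at least one vertex from each edge must be initially colored''), and upper bounds from the same explicit forcing colorings. Your argument for the $K_{m,n}$ lower bound, phrased in terms of the unique final extension rather than the initial coloring, is a slightly cleaner rendering of the paper's case analysis but is the same idea, and your bistar and $B_{1,1}=P_4$ treatments match the paper's.

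One small but genuine slip: in part (i) your construction ``color the odd-indexed vertices alternately $1,2,1,2,\dots$'' only works when $n$ is odd. When $n$ is even it colors just $n/2$ vertices and leaves the pendant $x_n$ uncolored, so $x_n$ retains two admissible colors and the coloring is not uniquely extendable --- contradicting your own lower-bound observation that both endpoints must lie in $S$. The paper repairs this exactly as you would expect, by additionally assigning $x_n$ the color $3$, which brings the count up to $\lceil (n+1)/2\rceil$; you should add that case to your construction.
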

\begin{proof}
	\begin{enumerate}
		\item [(i)]	Let $v_1, v_2, \dots,  v_n$ denote the vertices of $P_n$.  For any initial coloring of $P_n$ using $3$ colors, the two pendant vertices of $P_n$ must be initially colored.  Also, since the degree of the remaining vertices is $2$, atleast one vertex from each edge must also be initially colored.  Hence, $sn(P_n,3) \geq \lceil \frac{n+1}{2} \rceil$. \\
	
	Now, let $C$ be the initial coloring of $P_n$ defined as follows. \\
	
	When $n$ is odd, color the vertices $v_1, v_3, \dots, v_n$ with colors $1$ and $2$ alternately.  When $n$ is even, color the vertices $v_1, v_3, \dots, v_{n-1}$ with colors $1$ and $2$ alternately and $v_n$ with color $3$. 	Then $C$ is uniquely extendable to a proper $3$-coloring of $P_n$ with $ \lceil \frac{n+1}{2} \rceil $ vertices initially colored.  Thus $sn(P_n,3)= \lceil \frac{n+1}{2} \rceil$. 

\item [(ii)]Since $k=3$ and the degree of all vertices in $C_{2n}$ is $2$, atleast one vertex from each edge in $C_{2n}$ must be initially colored.  Hence, $sn(C_{2n},3) \geq n$. \\
Let $v_1, v_2, \dots, v_{2n}$ be the vertices of $C_{2n}$ and the initial coloring $C$ be as follows. \\
When $n$ is even, color the vertices $v_1, v_3, v_5, \dots,  v_{2n-1}$ with colors $1$ and $2$ alternately.  When $n$ is odd, color the vertices $v_1, v_3, v_5, \dots,  v_{2n-3}$ with colors $1$ and $2$ alternately and $v_{2n-1}$ with color $3$.  Clearly, $C$ is uniquely extendable to a proper $3$-coloring of $C_{2n}$ and hence $sn(C_{2n},3)=n$. 
\item [(iii)]Since we have $3$ colors available, all the $n$ pendant vertices must be initially colored.  Therefore, $ sn(K_{1,n},3) \geq n $. \\
Let the initial coloring of $K_{1,n}$ be as follows.  Color all the pendant vertices using only two colors say $1$ and $2$.  Then the root vertex is forced to receive color $3$.  Hence, $sn(K_{1,n},3) = n$. 
\item [(iv)]Let $X$ and $Y$ be the partition of the vertex set of $K_{m,n}$ with cardinality $m$ and $n$ respectively and let $C$ be the initial coloring. 	Atmost two colors can be used in any partition, since otherwise, $C$ will not be extendable. 	If all the vertices of the partition $X$ are colored with one color, then every vertex of $Y$ will have two colors in their color list and for $C$ to be uniquely extendable, every vertex of $Y$ must be initially colored.  If some vertices of $X$ are colored with two different colors, then all the vertices of $Y$ forcefully receives the third color and hence the remaining vertices of $X$ will have two colors in their color list.  So for $C$ to be uniquely extendable, every vertex of $X$ must be initially colored.  Therefore in either case, one partition set must be completely colored for $C$ to be uniquely extendable.  Since cardinality of $X$ is minimum, $sn(K_{m,n},3) \geq m$. \\
Consider the initial coloring $C$ as follows. \\
Choose the partition set $X$.  Color one vertex of $X$ with color 1 and remaining vertices of $X$ with color 2.  Then $C$ is uniquely extendable to a proper 3-coloring of $K_{m,n}$ with $m$ vertices. 	Thus $sn(K_{m,n},3)=m$. 

\item [(v)]Since all the $(m+n)$ pendant vertices must be initially colored, $sn(B_{m,n},3) \geq m+n$.  Conversely, let $u$ be the root vertex of the pendant vertices $u_1, u_2, \dots, u_m$ and $v$ be the root vertex of the pendant vertices $v_1, v_2, \dots,  v_n$ where both $m$ and $n$ are greater than 1.  Let the initial coloring $C$ be as follows. \\
Color $u_1, u_2, \dots, u_m$ using two colors $1$ and $2$ and color $v_1, v_2, \dots, v_n$ using two colors say $1$ and $3$.  Then, $C$ is uniquely extendable to a proper $3$-coloring of $B_{m,n}$ with $(m+n)$ vertices.  Hence, $sn(B_{m,n},3)=m+n$. \\

If either $m$ or $n$ is equal to $1$, say $n=1$, then let the initial coloring $C$ be as follows. \\
Color $u_1, u_2, \dots,  u_m $ using two colors say $1$ and $2$ and color $v_1$ with color $1$.  Hence $sn(B_{m,1},3)=m+1$. \\

If $m=n=1$, then $B_{1,1}= P_4$.  We have already proved that $sn(P_n,3)= \lceil \frac{n+1}{2} \rceil$.  Therefore, $sn(P_4,3)= \lceil \frac{4+1}{2} \rceil = 3. $

\end{enumerate}
\end{proof}
Figure~\ref{f:P6} gives the $3$- Sudoku coloring of $P_6$  with 4 initially colored vertices and its unique extension. \\
\begin{figure}[h]
	\centering
	\includegraphics[width=14cm]{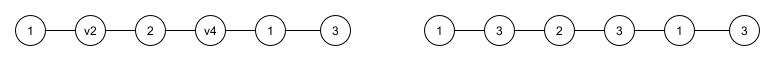}
	\caption{$3$- Sudoku coloring of $P_6$ and its final coloring}
	\label{f:P6}
\end{figure}
Figure~\ref{f:C6} gives the $3$- Sudoku coloring of $C_6$  with 3 initially colored vertices and its unique extension. \\
\begin{figure}[h]
	\centering
	\includegraphics[width=14cm]{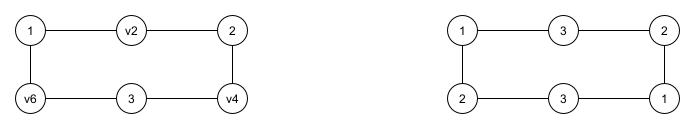}
	\caption{$3$- Sudoku coloring of $C_6$ and its final coloring}
	\label{f:C6}
\end{figure}
Figure~\ref{f:K1,6} gives the $3$- Sudoku coloring of $K_{1,6}$  with 6 initially colored vertices and its unique extension. \\
\begin{figure}[h]
	\centering
	\includegraphics[width=12cm]{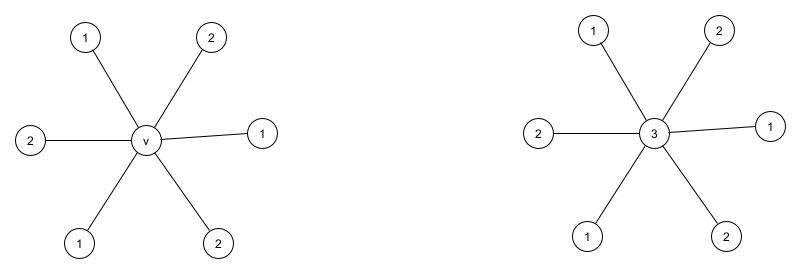}
	\caption{$3$- Sudoku coloring of $K_{1,6}$ and its final coloring}
	\label{f:K1,6}
\end{figure}
Figure~\ref{f:K3,4} gives the $3$- Sudoku coloring of $K_{3,4}$  with 3
initially colored vertices and its unique extension. \\
\begin{figure}[h]
	\centering
	\includegraphics[width=14cm]{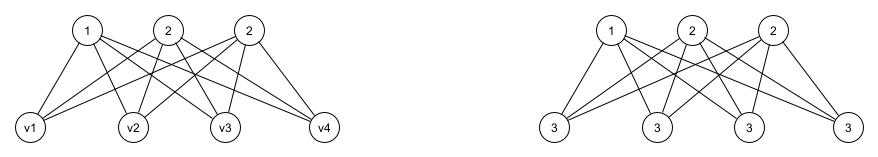}
	\caption{$3$- Sudoku coloring of $K_{3,4}$ and its final coloring}
	\label{f:K3,4}
\end{figure}
Figure~\ref{f:B3,2} gives the $3$- Sudoku coloring of $B_{3,2}$  with 5
initially colored vertices and its unique extension. \\
\begin{figure}[h]
	\centering
	\includegraphics[width=12cm]{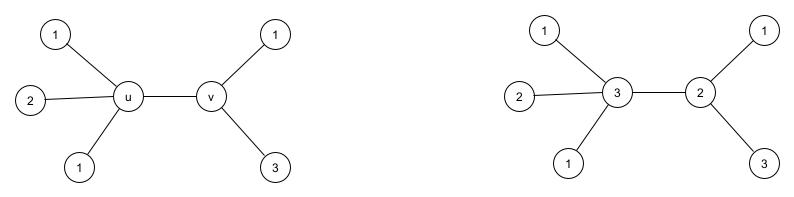}
	\caption{$3$- Sudoku coloring of $B_{3,2}$ and its final coloring}
	\label{f:B3,2}
\end{figure}
\begin{thm}
	For any bipartite graph $G$ of order $n$,
	\begin{equation*}
		sn(G \circ lK_1) = \begin{cases} n+1; & \mbox{if  $l=1$ }\\
			ln;    & \mbox{if $l \geq 2$}. 
		\end{cases}
	\end{equation*}
\end{thm}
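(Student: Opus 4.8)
The plan is to treat the two cases separately, using throughout the fact that, since $G$ is connected and bipartite, $\chi(G\circ lK_1)=2<3$, so we are genuinely computing a $3$-Sudoku number (the notation $sn(G\circ lK_1)$ abbreviating $sn(G\circ lK_1,3)$). The common lower bound comes from the pendant vertices: each of the $ln$ vertices of the $l$ copies of $K_1$ attached to the vertices of $G$ has degree $1$, so once its unique neighbour is coloured it still has $3-1=2$ admissible colours; hence every such vertex must lie in $S$, giving $sn(G\circ lK_1,3)\ge ln$. This is the $3$-colour analogue of Lemma~\ref{lem1} and needs no new idea.

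For $l\ge 2$ I would match this bound directly. Fix any proper $2$-colouring $c$ of $G$ (viewed as a $3$-colouring). For each vertex $v_i$ of $G$, colour its $l\ge 2$ pendants using \emph{both} colours of $\{1,2,3\}\setminus\{c(v_i)\}$, which is possible precisely because $l\ge 2$. Then every $v_i$ is forced to the single remaining colour $c(v_i)$, independently of its neighbours, so the only completion of this precolouring of the $ln$ pendants is $c$ on $V(G)$, which is proper. Hence taking $S$ to be the set of all pendants gives a $3$-Sudoku coloring and $sn(G\circ lK_1,3)=ln$.

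For $l=1$ the upper bound $sn\le n+1$ is obtained by adding one vertex of $G$ to the pendants. Root a spanning tree $T$ of $G$ at a vertex $v_1$, put $v_1\in S$ with colour $c(v_1)$, and for every other $v_i$ colour its unique pendant with the colour in $\{1,2,3\}\setminus\{c(v_i),c(v_{p(i)})\}$, where $v_{p(i)}$ is the parent of $v_i$ in $T$. Processing $T$ from the root, each $v_i$ is forced to $c(v_i)$, since its pendant forbids one colour and its already-coloured parent a second, and the non-tree edges are automatically satisfied because $c$ is proper; thus the extension is unique and $|S|=n+1$.

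The crux is the matching lower bound $sn\ge n+1$ for $l=1$: I must show that colouring only the $n$ pendants can never force a unique completion. Equivalently, I must prove that $G$ (connected, bipartite, with an edge) equipped with lists $L(v_i)=\{1,2,3\}\setminus\{\text{colour of its pendant}\}$ of size $2$ always has at least two proper list colourings whenever it has one. The tool is recolouring. Relabelling the colours as $\mathbb{Z}_3$, both cyclic shifts $\phi\mapsto\phi+1$ and $\phi\mapsto\phi-1$ of a proper list colouring $\phi$ are again proper; the shift $\phi+1$ respects all lists unless some vertex $v$ has forbidden colour $\phi(v)+1$, and $\phi-1$ unless some vertex has forbidden colour $\phi(v)-1$. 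Since every vertex forbids one of these two colours, uniqueness of $\phi$ forces both types of vertex to occur, and as $G$ is connected there is an edge joining one of each type. On the bichromatic subgraph through this edge, which (because $G$ is bipartite) is a disjoint union of paths and even cycles, a Kempe-type exchange produces a second completion, contradicting uniqueness: the cycle case is exactly Lemma~\ref{lem4}, the path case Lemma~\ref{lem3}, and the acyclic case is the leaf-by-leaf induction showing every tree with lists of size $\ge 2$ has at least two list colourings. Combining all bounds yields $sn(G\circ 1K_1,3)=n+1$. I expect this final step, converting the global obstruction to the two colour-rotations into a genuine local exchange for an arbitrary $2$-connected bipartite block, to be the main obstacle, the path, cycle and tree subcases being handled by Lemmas~\ref{lem3} and~\ref{lem4} and the short leaf induction.
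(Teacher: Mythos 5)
Your lower bound $sn(G\circ lK_1,3)\ge ln$ and both upper-bound constructions are sound, though they differ in detail from the paper's: for $l\ge 2$ the paper colours one pendant with colour $1$ and all remaining pendants with colour $2$, forcing a single vertex of $G$ to colour $3$ and then propagating through the connected bipartite graph, whereas you force every vertex of $G$ locally by giving its $l\ge 2$ pendants both colours of $\{1,2,3\}\setminus\{c(v_i)\}$; for $l=1$ the paper colours all pendants with colour $1$ and one vertex of $G$ with colour $2$ and propagates by parity of distance, whereas you use a rooted spanning tree. One slip in the spanning-tree construction: as written you only colour the pendants of $v_i$ for $i\neq 1$, so the pendant attached to $v_1$ is left with two admissible colours and your set has size $n$, not $n+1$; you must also put that pendant in $S$ with any colour other than $c(v_1)$ to obtain the $|S|=n+1$ you claim.

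The genuine gap is exactly where you flag it: the lower bound $sn(G\circ K_1,3)\ge n+1$. Your rotation argument correctly shows that if the $n$ pendant colours admitted a unique completion $\phi$, then vertices with forbidden colour $\phi(v)+1$ and vertices with forbidden colour $\phi(v)-1$ must both occur, and connectivity supplies an edge joining the two types. But the promised Kempe-type exchange does not follow from this: swapping the two colours $a,b$ on a connected component of the $\{a,b\}$-bichromatic subgraph always yields a proper colouring of $G$, yet it respects the lists only if every vertex of that component forbids the third colour, and nothing in your setup guarantees such a component exists. Lemmas~\ref{lem3} and~\ref{lem4} do not close this, since they concern a path or cycle taken in isolation with its own $2$-lists, not a bichromatic subgraph of $G$ whose recolouring must remain list-valid. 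The paper takes a different route here: with only the $n$ pendants coloured, every vertex of $G$ carries a $2$-element list, and it picks a path in $G$ between two uncoloured vertices and appeals to Lemma~\ref{lem3} to produce a second completion. To finish your version you would either need to adopt that argument or prove separately that a connected bipartite graph with $2$-lists drawn from $\{1,2,3\}$ never has a unique proper list colouring; as it stands, the step you yourself identify as the crux is missing.
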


\begin{proof}
	Case I: $l=1$ \\
	
	Since all the $n$ pendant vertices must be initially colored, $sn(G \circ K_1,3) \geq n$.  Suppose only $n$ pendant vertices are initially colored.  Then, if we consider any pair of the remaining vertices, there exists a path between them, with two colors in the color list of each vertex of that path.  Hence, the initial coloring is not uniquely extendable.  Therefore, $sn(G \circ K_1,3) \geq n+1$. \\
	
	Now, let the initial coloring $C$ be as follows. \\
	Color all the pendant vertices with color 1 and one vertex $v$ of $G$ with color 2.  Then all vertices at an odd distance from $v$ will receive color 3 and all vertices at an even distance from $v$ will receive color 2.  Hence, $C$ is uniquely extendable to a proper 3-coloring of $ G \circ K_1 $ with $ n+1 $ vertices.  Hence, $ sn(G \circ K_1, 3)=n+1 $. \\
	
	Case II: $l \geq 2$\\
	
	Since all the $ln$ pendant vertices must be initially colored, $sn(G \circ lK_1,3) \geq ln$. \\
	Now, let the initial coloring $C$ be as follows. \\
	Color one pendant vertex with color 1 and the remaining pendant vertices with color 2.  Then $C$ is uniquely extendable to a proper 3-coloring of $G \circ lK_1$ with $ln$ vertices. 
\end{proof}
Figure~\ref{f:GoK1} gives the $3$- Sudoku coloring of $G\circ K_1$  with 8
initially colored vertices and its unique extension. \\
\begin{figure}[h]
	\centering
	\includegraphics[width=14cm]{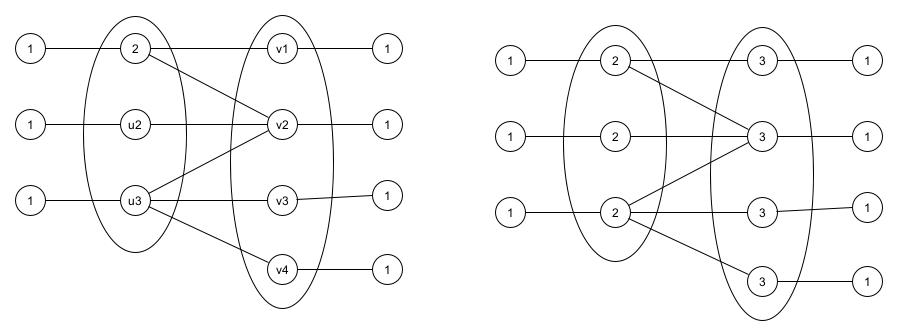}
	\caption{$3$- Sudoku coloring of $G\circ K_1$ and its final coloring}
	\label{f:GoK1}
\end{figure}
Figure~\ref{f:Go2K1} gives the $3$- Sudoku coloring of $G\circ 2K_1$  with 14
initially colored vertices and its unique extension. \\
\begin{figure}[h]
	\centering
	\includegraphics[width=14cm]{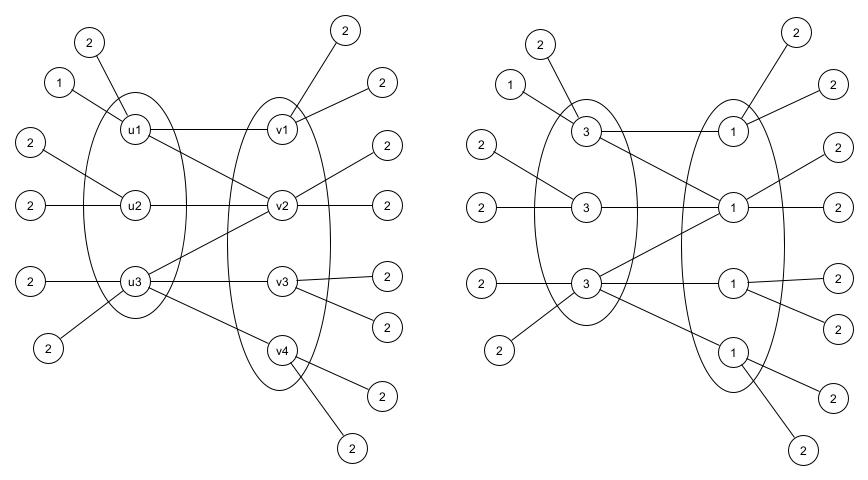}
	\caption{$3$- Sudoku coloring of $G\circ 2K_1$ and its final coloring}
	\label{f:Go2K1}
\end{figure}

\section{Some other results}
\begin{thm}
	Let $G$ be a graph of order $n$.  Then,
	$sn(G,k)=n$  if and only if $k \geq \Delta(G)+2$. 
	\label{thm1}
\end{thm}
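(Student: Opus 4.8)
The plan is to reduce the whole statement to a single local question about a maximum‑degree vertex, and then to dispatch the two implications separately: the forward one is essentially a degree count, while the backward one needs a recolouring (Kempe‑chain) argument. I first record the trivial bound $sn(G,k)\le n$, since colouring all of $V$ is a $k$‑Sudoku colouring (it extends only to itself); thus the content of ``$sn(G,k)=n$'' is the lower bound. I would reformulate that lower bound combinatorially as follows. If some proper subset $S\subsetneq V$ carries a $k$‑Sudoku colouring with unique extension $c^{*}$, then for any $v\notin S$ the restriction $c^{*}|_{V\setminus\{v\}}$ is again uniquely extendable (two global colourings agreeing off $v$ would both extend the $S$‑colouring). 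Hence $sn(G,k)\le n-1$ holds if and only if some vertex $v$ can be left uncoloured yet forced, which happens exactly when $G$ admits a proper $k$‑colouring in which $N(v)$ meets precisely $k-1$ colours (the one missing colour is then forced on $v$). Note $k\ge \Delta(G)+2$ forces $k>\chi(G)$, so $sn(G,k)$ is defined throughout.

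For $k\ge\Delta(G)+2\Rightarrow sn(G,k)=n$, I argue directly: every vertex $v$ satisfies $\deg(v)\le\Delta(G)\le k-2$, so in any proper $k$‑colouring $N(v)$ meets at most $k-2$ colours and $v$ has at least two admissible colours. By the reformulation no vertex can be forced, so no proper subset carries a $k$‑Sudoku colouring and $sn(G,k)=n$.

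For the converse I use contraposition: assuming $k\le\Delta(G)+1$ I must exhibit a $k$‑Sudoku colouring on $n-1$ vertices. I choose a vertex $v$ with $\deg(v)=\Delta(G)\ge k-1$. By the reformulation it then suffices to establish the Claim that $G$ has a proper $k$‑colouring in which $N(v)$ realises exactly $k-1$ colours; restricting such a colouring to $V\setminus\{v\}$ is the desired Sudoku colouring and yields $sn(G,k)\le n-1<n$.

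The Claim is the core of the argument and the step I expect to be the main obstacle. I would prove it by an extremal exchange argument: among all proper $k$‑colourings of $G$ pick one, $c$, maximising $t:=|c(N(v))|$, and suppose for contradiction $t\le k-2$. Normalising $c(v)=1$, some colour $a$ is missing on $N(v)$, and since $|N(v)|=\Delta(G)\ge k-1>t$, pigeonhole gives a colour $b$ worn by at least two neighbours $x,x'$ of $v$. The goal is to recolour one $b$‑neighbour to $a$ while another retains $b$, which raises $t$ and contradicts maximality: if $x$ has no neighbour coloured $a$, set $c(x)=a$; otherwise swap $a\leftrightarrow b$ along the $(a,b)$‑Kempe chain through $x$, which stays proper and, \emph{provided} $x'$ lies outside that chain, again increases $t$. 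The genuinely delicate case—the real obstacle—is when every $b$‑coloured neighbour of $v$ lies in one single $(a,b)$‑component, so that the swap merely permutes colours and leaves $t$ fixed. This is precisely where the proof must use that $\deg(v)$ is \emph{maximum}: my plan is to break such a trapped component by first recolouring one of the vertices that block $N(v)$ from the missing colour $a$ (as is possible, for instance, in the complete bipartite case) and only then performing the swap, the point being that a maximum‑degree $v$ leaves enough slack that no proper $k$‑colouring can be a local maximum with $t\le k-2$. Verifying that this secondary exchange always exists and preserves properness is the technical heart; granting it, maximality forces $t=k-1$, which proves the Claim and with it the theorem.
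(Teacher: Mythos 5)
Your reduction of the theorem to the local statement ``$sn(G,k)\le n-1$ iff some vertex $v$ admits a proper $k$-coloring in which $N(v)$ meets exactly $k-1$ colors'' is correct and is a cleaner formalization than what the paper writes, and your forward implication ($k\ge\Delta(G)+2$ gives every vertex at least two admissible colors in every proper $k$-coloring, so no vertex is ever forced) is sound and matches the paper's idea. The problem is the converse. Your entire backward implication rests on the Claim that a proper $k$-coloring maximizing $t=|c(N(v))|$ must achieve $t=k-1$ when $\deg(v)=\Delta(G)\ge k-1$, and you do not prove it: you yourself isolate the case where every $b$-colored neighbour of $v$ lies in a single $(a,b)$-Kempe component (where the swap merely replaces $b$ by $a$ in $c(N(v))$ and leaves $t$ unchanged), and your resolution of that case is a statement of intent (``my plan is to break such a trapped component\dots'') rather than an argument. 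Since this can occur simultaneously for every choice of missing color $a$ and repeated color $b$, the exchange process as described can cycle without ever increasing $t$, and nothing you have written rules that out. This is a genuine gap, not a routine verification: it is exactly the kind of case analysis that makes Brooks-type exchange arguments delicate, and ``maximality of $\deg(v)$ leaves enough slack'' is not a proof.

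For comparison, the paper's converse goes in the opposite direction: rather than repairing an arbitrary coloring, it \emph{constructs} one outward from $v$ --- color the $\Delta(G)\ge k-1$ neighbours of $v$ so that exactly $k-1$ colors appear on $N(v)$, extend this to a proper $k$-coloring of $G-v$, and observe that $v$ is then forced. That route avoids Kempe chains entirely (its own burden is justifying that the precoloring of $N(v)$ extends, which the paper dispatches with a brief appeal to the greedy algorithm). If you want to salvage your approach, either complete the trapped-component analysis in full, or switch to the constructive route; as it stands, the backward implication is unproved.
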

\begin{proof}
	Assume that $k \geq \Delta(G) + 2$.  Let $C$ be any initial coloring of $G$.  The vertices of $G$ that are not yet colored can be colored using Greedy algorithm so that, in every extension step, there will be at least two colors in the color list of every vertex.  Hence, all the vertices must be initially colored so that $sn(G,k)=n$. \\
	
	Conversely, let $sn(G,k)=n$.  If possible, let $k \leq \Delta(G)+1$.  Consider a vertex $v$ with maximum degree $ \Delta(G)$.  Color all its neighbours with $k-1$ different colors and this can be extend to a proper coloring of $G-v$ using Greedy algorithm.  Now, $v$ is forced to receive a color and hence $sn(G,K) \leq n-1$, which is a contradiction.  Therefore, $k \geq \Delta(G)+2$. 
\end{proof}
\begin{thm}
	Let $G$ be a bipartite graph with $l$ pendant vertices and order $n(G)$.  Then,
\begin{equation*}
	l \leq sn(G,3) \leq n(G). 
\end{equation*}	 
Moreover, \begin{enumerate}
	\item[(i)] $ sn(G,3) = n(G)$ if and only if $ G $ is either $K_1$ or $K_2$. 
	\item[(ii)] $sn(G,3) = n(G)- 1$ if and only if $ G $ is either a star $K_{1,n}$ or a path $P_4$. 
\end{enumerate}
\end{thm}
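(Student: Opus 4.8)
The plan is to dispose of the two inequalities, part (i), and the reverse implication of part (ii) quickly, and then to concentrate the real work on the forward implication of part (ii). For the upper bound $sn(G,3)\le n(G)$ I would simply note that colouring every vertex is vacuously a $3$-Sudoku colouring. For the lower bound I would observe that a pendant vertex $w\notin S$ has a single neighbour, hence at least two admissible colours in any extension, so the extension cannot be unique; thus every pendant must lie in $S$ and $sn(G,3)\ge l$ (this is the $k=3$ specialisation of the mechanism behind Lemma~\ref{lem1}). Part (i) is then immediate from Theorem~\ref{thm1}: with $k=3$ we have $sn(G,3)=n(G)$ iff $3\ge\Delta(G)+2$, i.e. $\Delta(G)\le 1$, and the only connected graphs with $\Delta\le 1$ are $K_1$ and $K_2$. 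For the reverse implication of part (ii) I would quote Theorem~\ref{thm2}: $sn(K_{1,m},3)=m=n(K_{1,m})-1$ and $sn(P_4,3)=\lceil 5/2\rceil=3=n(P_4)-1$.

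For the forward implication, suppose $sn(G,3)=n(G)-1$. Since this is strictly less than $n(G)$, Theorem~\ref{thm1} gives $\Delta(G)\ge 2$, so $G$ has at least one vertex of degree $\ge 2$, and I would split on how many such vertices there are. If there is exactly one, say $c$, then every other vertex has degree $1$ and, by connectedness, is adjacent to $c$; hence $G=K_{1,n-1}$, as required. If there are at least two vertices of degree $\ge 2$, I claim $G$ must be $P_4$. Suppose first that all degree-$\ge 2$ vertices are pairwise adjacent; since $G$ is triangle-free there are then exactly two of them, say $u\sim v$, and connectedness forces every remaining vertex to be a pendant attached to $u$ or to $v$, so $G$ is a bistar $B_{a,b}$ with $a,b\ge 1$. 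By Theorem~\ref{thm2}(v), $sn(B_{a,b},3)=a+b=n-2$ whenever $(a,b)\ne(1,1)$, contradicting $sn(G,3)=n-1$; hence $(a,b)=(1,1)$ and $G=B_{1,1}=P_4$.

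The crux is the remaining case, where $G$ has two \emph{non-adjacent} vertices of degree $\ge 2$, and I must show this forces $sn(G,3)\le n-2$, a contradiction. The preliminary step is to locate two such vertices on the same side of the bipartition $(A,B)$: if each side contained at most one vertex of degree $\ge 2$, then the two degree-$\ge 2$ vertices would be $u\in A$, $v\in B$ with $u\not\sim v$, all remaining vertices being pendants attached to $u$ or $v$, and then the stars around $u$ and around $v$ would share no vertex, contradicting connectedness. So I may fix $a_1,a_2\in A$ of degree $\ge 2$; they are non-adjacent with $N(a_1),N(a_2)\subseteq B$. I would then colour $A\setminus\{a_1,a_2\}$ with $1$ and $B$ with $2$, and recolour to $3$ either one common neighbour of $a_1,a_2$ (if $N(a_1)\cap N(a_2)\ne\emptyset$) or one private neighbour of each (if the neighbourhoods are disjoint). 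Because $\deg a_i\ge 2$, each $a_i$ retains a colour-$2$ neighbour while gaining a colour-$3$ one, so $N(a_i)$ uses exactly $\{2,3\}$ and $a_i$ is forced to colour $1$; the recoloured vertices lie in $B$, hence are pairwise non-adjacent and adjacent only to colour-$1$ vertices, so the colouring stays proper. This exhibits a $3$-Sudoku colouring on $V\setminus\{a_1,a_2\}$, giving $sn(G,3)\le n-2$.

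I expect this last case to be the main obstacle. Two points require care: the connectedness argument that guarantees a same-side pair of degree-$\ge 2$ vertices (ruling out the ``one per side'' configuration), and the verification that the recolouring never accidentally drives some $N(a_i)$ to the single colour $3$ — this is exactly why the common-neighbour versus disjoint-neighbour split is needed, and why $\deg a_i\ge 2$ is invoked to keep a colour-$2$ neighbour alive. Everything else reduces to applying Theorems~\ref{thm1} and~\ref{thm2}.
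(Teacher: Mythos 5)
Your proposal is correct, and it reaches the same endpoint as the paper by a slightly different decomposition. The bounds, part (i), and the reverse direction of (ii) coincide with the paper's treatment (Theorem~\ref{thm1} for (i), Theorem~\ref{thm2} for the converse of (ii)); the divergence is in the forward direction of (ii). The paper splits on the \emph{number of pendant vertices}: $n-1$ pendants gives a star, $n-2$ gives a bistar (hence $P_4$ via Theorem~\ref{thm2}(v)), and for $n-r$ pendants with $r\ge 3$ it uses the fact that the non-pendant vertices induce a connected subgraph containing an induced $P_3=uvw$, leaves $u$ and $w$ uncoloured, and forces them via the common neighbour $v$ together with extra neighbours $u',w'$ coloured differently from $v$. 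You instead classify by the set of degree-$\ge 2$ vertices and their adjacency, and in the crux case locate two such vertices on the same side of the bipartition, colouring by the bipartition and recolouring one common or two private neighbours to the third colour. The two forcing constructions are close cousins (both leave exactly two non-adjacent degree-$\ge 2$ vertices uncoloured and arrange that each sees two colours), but your write-up is the more carefully verified of the two: you make explicit why a colour-$2$ neighbour survives the recolouring (using $\deg a_i\ge 2$ and the common-versus-disjoint neighbourhood split) and why the resulting partial colouring is proper and extendable, points the paper passes over with ``colour the other vertices in such a way that\dots''. Your same-side-pair connectedness argument and the bistar reduction are both sound, and your pendant-vertex lower bound is actually cleaner than the paper's appeal to Lemma~\ref{lem1}, whose stated hypothesis $\chi(G)\ge 3$ does not literally hold for bipartite $G$; you correctly extract the underlying $k=3$ mechanism instead.
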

\begin{proof}
	The upper bound is trivially true and the lower bound follows from Lemma ~\ref{lem1}. 
	\begin{enumerate}
		\item[(i)] By Theorem \ref{thm1}, $sn(G,k)=n$  if and only if $k \geq \Delta(G)+2$.  Here $ k = 3 $.  So that the inequality becomes $ 3 \geq \Delta(G)+2$.  That is, $\Delta(G) \leq 1$.  $G$ being connected, $\Delta(G)\leq 1$ implies $G$ is either $K_1$ or $K_2$.  Also, $sn(K_1,3)=1$ and $sn(K_2,3)=2$.  Hence the result. 
		\item[(ii)] By Theorem \ref{thm2}(iii), if $G$ is a star $K_{1,n}$, then $sn(G,3)=n(G)-1$.  By Theorem \ref{thm2}(i), if $G$ is a path $P_4$, then $sn(G,3)=3=n(G)-1$. \\
		
		Conversely let $sn(G,3)=n(G)-1$. \\
		
		If there are $n(G)-1$ pendant vertices, then $G$ is a star $K_{1,n}$.  If there are $n(G)-2$ pendant vertices, then $G$ is a bistar $B_{m,n}$ and by Theorem \ref{thm2}(v), $sn(G,3)=n(G)-1$ only when $G=B_{1,1}=P_4$. \\
		Suppose there are $n(G)-r$ pendant vertices where $r\geq 3$.  Since $G$ is a connected bipartite graph, the $r$ non pendant vertices also form a connected bipartite graph and will contain an induced subgraph $P_3$, say $uvw$.  Since $u$ and $w$ are non pendant vertices, they will have atleast one more neighbour, say $u^{'}$ and $w^{'}$ (need not be distinct) respectively, other than $v$. \\
		Let $C$ be an extendable coloring of $G$ as follows.  Color all vertices of $G$ other than than $u$ and $w$ in such a way that $u^{'}$ and $w^{'}$ are given a color different from that given to $v$.  Then $C$ is a uniquely extendable coloring with $n(G)-2$ initially colored vertices which is a contradiction.  Therefore, $sn(G,3)=n(G)-1$ only when $G$ is a star $K_{1,n}$ or a path $P_4$.  Hence the result. 
	\end{enumerate}
\end{proof}
\begin{thm}
	Let $G$ be a bipartite graph of order $n$.  Then, $sn(G,3)=n(G)-2$ if and only if $G$ is any one of the following graphs. 
	\begin{itemize}
		\item a bistar $B_{m,n}$ except for $m=n=1$. 
		\item a path $P_5$ or $P_6$. 
		\item a cycle $C_4$. 
		\item a cycle $C_4$ with exactly one pendant vertex attached to any one of the vertices of $C_4$. 
		\item a path $P_4$ with $l$ pendant vertices attached to one end of $P_4$ where $l>1$. 
		\item a path $P_5$ with a pendant vertex attached to the central vertex of $P_5$. 
	\end{itemize}
\end{thm}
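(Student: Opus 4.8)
The plan is to prove both implications, handling the \emph{sufficiency} direction by direct evaluation and reserving the real work for the \emph{necessity} direction, which is a structural classification. Throughout I will use that in a proper $3$-colouring every pendant must be coloured (Lemma~\ref{lem1}).

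For sufficiency, I would dispatch three of the six families straight from Theorem~\ref{thm2}: a bistar $B_{m,n}$ with $(m,n)\neq(1,1)$ has order $m+n+2$ and $sn=m+n$ by part (v); the paths $P_5,P_6$ give $\big\lceil\frac{n+1}{2}\big\rceil=n-2$ for $n=5,6$ by part (i); and $C_4$ is the case $n=2$ of part (ii), giving $sn=2=n-2$. For the remaining three families ($C_4$ with one pendant, the broom $P_4$ with $l>1$ pendants at one end, and $P_5$ with a pendant at its centre) I would establish the lower bound $sn\geq n-2$ from Lemmas~\ref{lem1}--\ref{lem3}: after colouring the forced pendants, any attempt to leave three vertices uncoloured leaves either an edge both of whose endpoints have degree $\leq 2$ (Lemma~\ref{lem2}) or a path all of whose colour-lists have size $\geq 2$ (Lemma~\ref{lem3}); in each case uniqueness fails. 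Matching explicit uniquely extendable colourings on exactly $n-2$ vertices then give equality.

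For necessity, suppose $sn(G,3)=n-2$. Write $l$ for the number of pendant vertices and $r=n-l$; by Lemma~\ref{lem1}, $l\le n-2$, so $r\ge 2$, and the non-pendant vertices induce a connected (every internal vertex of a path has degree $\ge 2$) triangle-free subgraph $H$ on $r$ vertices. If $r=2$, then exactly as in the proof of the preceding theorem two non-pendant vertices force $G$ to be a bistar $B_{m,n}$, and $sn=n-2$ excludes $(m,n)=(1,1)$, which is $P_4$ with $sn=n-1$. If $r\ge 3$, then $H$ contains an induced $P_3$, so the construction in the proof of the preceding theorem already gives $sn\le n-2$; hence the force of the hypothesis is the \emph{lower} bound, namely that no colouring on $n-3$ vertices is uniquely extendable. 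I would exploit this by first showing $r\le 4$ (for $r\ge 5$ one can colour all pendants together with $r-3$ further non-pendant vertices and leave three non-pendant vertices uniquely forced, contradicting $sn\ge n-2$), and then classifying $H$: it is $P_3$ when $r=3$ and one of $P_4,\ K_{1,3},\ C_4$ when $r=4$. For each $H$ I would determine exactly which pendant-attachment patterns admit no uniquely extendable $(n-3)$-colouring, obtaining: $P_3\Rightarrow P_5$, a broom, or $P_5$-with-central-pendant; $P_4\Rightarrow P_6$; $K_{1,3}\Rightarrow$ nothing (colouring the centre and all pendants always forces the three leaves); and $C_4\Rightarrow C_4$ or $C_4$ with a single pendant.

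The hard part is this pendant-pattern analysis for $r\ge 3$, because whether an $(n-3)$-colouring can be made unique is governed by subtle \emph{forcing chains} rather than by a mere count. For example, with $H=P_3=xyz$ carrying one extra pendant on each of $x,y,z$ (the graph $P_5$ with a central pendant), all three colour-lists have size $2$, Lemma~\ref{lem3} applies, and $sn=n-2$; but giving $x$ two pendants while keeping single pendants on $y,z$ lets one force $x$ from its pendants, then $y$ from $x$ together with its own pendant, then $z$ from $y$ together with its pendant, so $sn$ drops to $n-3$. Isolating precisely the patterns $(a,1,0)$ and $(1,1,1)$ for $P_3$ (and the analogous thresholds for $P_4$ and for $C_4$, where even two pendants already permit an $(n-3)$-colouring) is where the care lies, and the reduction $r\le 4$ rests on the same kind of explicit construction of forced triples.
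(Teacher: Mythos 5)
Your strategy is essentially the paper's: sufficiency via Theorem~\ref{thm2} for the bistar, $P_5$, $P_6$ and $C_4$, plus explicit colourings and the lower-bound Lemmas~\ref{lem1}--\ref{lem4} for the other three families; necessity by stratifying on the number $r$ of non-pendant vertices (the paper's Cases I, II, III are exactly $r=2$, $r=3$, $r\ge 4$), using that these vertices induce a connected triangle-free graph containing an induced $P_3$. Your only real organizational difference is proposing to first prove $r\le 4$ and then enumerate $H\in\{P_3,P_4,K_{1,3},C_4\}$, where the paper instead handles all $r\ge 4$ at once by fixing an induced $P_3=uvw$ together with one further non-pendant vertex $x$ and splitting on how $x$ attaches to it. The conclusions you state for each $H$ (only $P_6$ survives from $P_4$; nothing from $K_{1,3}$; $C_4$ and $C_4$ plus one pendant from $C_4$; $P_5$, the broom and the central-pendant $P_5$ from $P_3$) are all correct.

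The gap is that the decisive content of the necessity direction is asserted rather than proved. The reduction ``$r\ge 5$ implies some $(n-3)$-colouring is uniquely extendable'' gets only a parenthetical justification (``leave three non-pendant vertices uniquely forced''), but choosing \emph{which} three vertices to leave uncoloured and arranging colours on their neighbours so that each is forced is exactly where the difficulty lies: the exceptions $P_6$, $C_4$ and $C_4$ with a pendant all sit at $r=4$, so no generic counting argument can work, and the paper needs an extended case analysis (its Case III, Subcases I--III, including the catalogue of induced subgraphs in Figure~\ref{f:subgraphs}) to rule out further exceptions. Likewise the per-$H$ pendant-pattern classification for $r=3$ and $r=4$ is stated as an outcome with a single worked example for $H=P_3$; for $H=P_4$ and $H=C_4$ you would still have to exhibit, for every pattern other than the exceptional ones, a uniquely extendable colouring on $n-3$ vertices (and verify via Lemmas~\ref{lem2}--\ref{lem4} that the exceptional patterns admit none). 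So the proposal is a correct plan whose hard half is deferred rather than carried out.
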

\begin{proof}
		 By Theorem \ref{thm2}(v), if $G$ is a bistar $B_{m,n}$, then except for $m=n=1$, $sn(G,3)=m+n=n(G)-2$. \\
		 By Theorem \ref{thm2}(i), $sn(P_5,3)=3=n(G)-2$ and $sn(P_6,3)=4=n(G)-2$. \\
		 By Theorem\ref{thm2}(ii), $sn(C_4,3)=2=n(G)-2$. \\
		 Let $G$ be the graph obtained from a cycle $C_4$, say $v_1v_2v_3v_4v_1$ by attaching a pendant vertex $v_5$ to any one of the vertices of $C_4$, say $v_1$.  By Lemma \ref{lem3} and Lemma \ref{lem4}, $sn(G,3)\geq 3$.  Let $C$ be an extendable coloring of $G$ as follows.  $C(v_1)=1$, $C(v_3)=2$ and $C(v_5)=3$.  Then $C$ is uniquely extendable and hence $sn(G,3)=3=n(G)-2$. \\
		 Let $G$ be the graph obtained from a path $P_4$, say $v_1v_2v_3v_4$ by attaching $l$ pendant vertices to $v_4$ where $l>1$.  By Lemma \ref{lem2}, $sn(G,3)\geq n(G)-2$.  Let $C$ be an extendable coloring of $G$ as follows.  $C(v_1)=1, C(v_2)=2$ and color all the $l$ pendant vertices using color 1 and color 2.  Then $C$ is uniquely extendable and hence $sn(G,3)=n(G)-2$. \\
		 Let $G$ be the graph obtained from a path $P_5$, say $v_1v_2v_3v_4v_5$ by attaching a pendant vertex $v_6$ to the central vertex $v_3$.  By Lemma \ref{lem3}, $sn(G,3)\geq 4$.  Let $C$ be an extendable coloring of $G$ as follows.  $C(v_1)=C(v_5)=C(v_6)=1$ and $C(v_3)=2$.  Then $C$ is uniquely extendable and hence $sn(G,3)=4=n(G)-2$. \\
		 
		 Conversely, let $sn(G,3)=n(G)-2$. \\
		 
		 Case I: $G$ has $n(G)-2$ pendant vertices. \\
		 
		 Then $G$ is a bistar $B_{m,n}$ and by Theorem \ref{thm2}(v), $sn(G,3)=n(G)-2$ except for $m=n=1$. \\
		 
		 Case II: $G$ has $n(G)-3$ pendant vertices. \\
		 
		 Since $G$ is a connected bipartite graph, the three non pendant vertices will form an induced subgraph $P_3$, say $uvw$. \\
		 
		 Subcase I: $d(v)=2$
		 \begin{enumerate}
		 	\item If $d(u)=d(w)=2$, then $G$ is a path $P_5$ for which $sn(P_5,3)=n(G)-2$. 
		 	\item If $d(u)=2$ and $d(w)>2$ or vice-versa, then $G$ is the graph obtained from $P_4$ with $l$ pendant vertices attached to one end of $P_4$ where $l>1$ and $sn(G,3)=n(G)-2$. 
		 	\item If $d(u)>2$ and $d(w)>2$, then color the pendant vertices of $u$ with color 1 and color 2 and that of $w$ with color 2 and color 3 so that we get a uniquely extendable coloring with $n(G)-3$ initially colored vertices which is a contradiction. 
		 \end{enumerate}
	 Subcase II: $d(v)\geq3$\\
	 
	If $d(u)>2$ and $d(w) \geq 2$ or vice-versa, then color the pendant vertices of $u$ with color 1 and color 2 and that of $v$ and $w$ by color 1.  Then, we get a uniquely extendable coloring with $n(G)-3$ initially colored vertices which is a contradiction. \\
	If $d(u) = d(w) = 2$ and $d(v)>3$, then color the pendant vertices of $v$ with color 1 and color 2 and that of $u$ and $w$ by color 1.  Then, we get a uniquely extendable coloring with $n(G)-3$ initially colored vertices which is a contradiction. \\
	If $d(u)=d(w)=2$ and $d(v)=3$, then $G$ is the graph obtained from $P_5$ by attaching a pendant vertex to the central vertex of $P_5$ and $sn(G,3)=n(G)-2$. \\

	 Case III: $G$ has $n(G)-r$ pendant vertices where $r\geq 4$\\
	 
	 Since $G$ is a connected bipartite graph, the $r$ non pendant vertices also form a connected bipartite graph and will contain an induced subgraph $P_3$, say $uvw$ and atleast one more non pendant vertex, say $x$ adjacent to atleast one vertex of this $P_3$. \\
	 
	 Subcase I: $x$ adjacent to $v$\\
	 
	 Since $G$ is bipartite, $x$ cannot be adjacent to $u$ and $w$.  Since $u$, $w$ and $x$ are non pendant vertices, they will have atleast one more neighbour other than $v$.  Let an extendable coloring $C$ of $G$ be as follows.  Color all vertices of $G$ other than $u$, $w$ and $x$ in such a way that all neighbours of $u$, $w$ and $x$ are given a color different from that given to $v$.  Then $C$ can be uniquely extended with $n(G)-3 $ initially colored vertices which is a contradiction. \\
	 
	 Subcase II: $x$ adjacent to $u$ (or equivalently $w$) alone
	 \begin{enumerate}
	 	\item  Let $d(u)>2$.  Then $u$ will have atleast one more neighbour other than $x$ and $v$.  Let $C$ be an extendable coloring of $G$ as follows.  Color all vertices of $G$ other than $u$, $w$ and $x$ in such a way that all neighbours of $u$ other than $x$ and all neighbours of $w$ are given a color different from that given to $v$.  Color all neighbours of $x$ other than $u$ with color given to $v$.  Then $C$ can be uniquely extended with $n(G)-3$ initially colored vertices which is a contradiction. 
	 	\item Let $d(u)=2$, but $d(v)>2$, then a similar set of arguments work. 
	 	\item Let $d(u)=d(v)=2$\\
	 	If $d(x)=d(w)=2$, then $x$ will have one more neighbour $x_1$ and $w$ will have one more neighbour $w_1$.  If $x_1$ and $w_1$ are pendant vertices, then $G$ is a path $P_6$ for which $sn(P_6,3)=4=n(G)-2$. \\
	 	Suppose $x_1$ is a pendant vertex and $w_1$ is a non pendant vertex or vice-versa or both $x_1$ and $w_1$ are non pendant vertices.  Then $w_1$ will have atleast one more neighbour other than $w$,  say $w_2$.  Let $C$ be an extendable coloring of $G$ as follows.  $C(x_1)=C(w)=1, C(u)=C(w_2)=2$.  Color the remaining vertices of $G$ other than $x$, $v$ and $w_1$ such that $C$ is a proper coloring.  Then $C$ is uniquely extendable with $n(G)-3$ initially colored vertices which is a contradiction.  \\
	 	Let $d(x)>2$.  Let $C$ be an extendable coloring of $G$ as follows.  $C(v)=1$, $C(w_1)=2$.  Color the neighbours of $x$ using color 1 and color 2.  Color the remaining vertices of $G$ other than $x$, $u$ and $w$ such that $C$ is a proper coloring.  Then $C$ is uniquely extendable with $n(G)-3$ initially colored vertices which is a contradiction. \\
	 	If $d(x)=2$, but $d(w)>2$, then a similar set of arguments work. 
	 \end{enumerate}
 Subcase III: $x$ adjacent to both $u$ and $w$\\
 
 If $G$ has exactly four vertices, then $G$ is a $C_4$ and $sn(C_4,3)=2=n(G)-2$.  If $G$ has exactly five vertices, say $u$, $v$, $w$, $x$ and $y$, then $y$ is either adjacent to opposite pair of vertices or adjacent to any one of the vertices of $C_4$. 
 \begin{enumerate}
 	\item Let $y$ be adjacent to opposite pair of vertices, say $x$ and $v$.  Color $x$ with color 1 and $v$ with color 2 so that we get a uniquely extendable coloring with $n(G)-3$ initially colored vertices which is a contradiction. \\
 	
 	We can use similar argument for any bipartite graph with atleast six vertices which has such a graph $G$ as an induced subgraph. 
 	\item Let $y$ be adjacent to any one of the vertices of $C_4$, say $x$.  Then $sn(G,3)=n(G)-2$.  Suppose $G$ has atleast one more vertex, say $z$.  Then, $G$ will contain any one of the graphs given in Figure \ref{f:subgraphs} as an induced subgraph. (Graphs not mentioned in the figure are already considered in Case III Subcase III(1)).  If we initially color the vertices of those induced subgraphs as shown in the figure and color the remaining vertices of $G$ such that it is a proper coloring, then we get a uniquely extendable coloring with $n(G)-3$ initially colored vertices which is a contradiction.  Hence the result. 
 \end{enumerate}
\end{proof}
\begin{figure}[h]
	\centering
	\includegraphics[width=14cm]{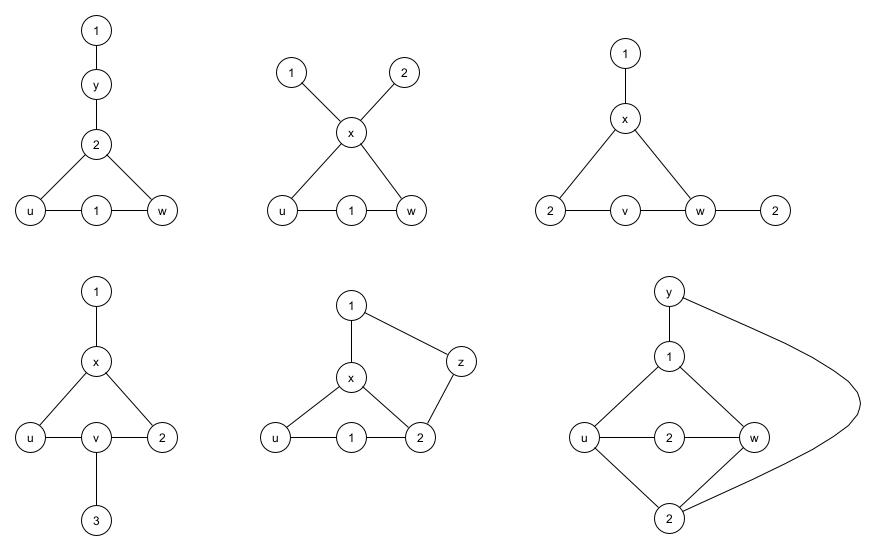}
	\caption{Induced subgraphs of $G$}
	\label{f:subgraphs}
\end{figure}

We could observe that, if the number of colors used in Sudoku coloring is greater than $\chi(G) $, there is a drastic change in the Sudoku number of a graph and hence, it becomes difficult to study its properties.  So, if we can find a supergraph $H$ for $G$, such that $\chi(H)=\chi(G)+1$ and $sn(G,\chi(G)+1)=sn(H)$, then it is enough to study the properties of $sn(H)$ rather than that of $sn(G,\chi(G)+1)$.  In our attempt to find such a graph $H$, we obtained the following results. 
\begin{thm}
	Let $H$ be the graph obtained from a path $P_n$ by attaching a complete graph $K_m$ on any edge of $P_n$.  Then,\\
	$sn(H)=sn(P_n,m)+m-4 = n+m-4$, where $m \geq 4$. \\
	Also, $sn(H)=sn(P_n,3)$. 
	\label{th2}
\end{thm}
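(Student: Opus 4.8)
The plan is to reduce the whole statement to the single equality $sn(H)=n+m-4$; the remaining two expressions then come for free. Since $H$ contains $K_m$, and a proper $m$-colouring is obtained by colouring the clique with all $m$ colours and propagating two colours along each arm of the path, we have $\chi(H)=m$ and $n(H)=n+m-2$. Because $\Delta(P_n)=2$, Theorem~\ref{thm1} gives $sn(P_n,m)=n$ for every $m\ge 4$, so $sn(P_n,m)+m-4=n+m-4$. Thus it suffices to show that a Sudoku colouring of $H$ leaves \emph{exactly two} vertices uncoloured. Throughout write $x_jx_{j+1}$ for the edge of $P_n$ carrying the clique and $y_1,\dots,y_{m-2}$ for the new vertices, so $V(K_m)=\{x_j,x_{j+1},y_1,\dots,y_{m-2}\}$ and $k=\chi(H)=m$.

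For the lower bound I would first observe that every vertex $v$ with $\deg_H(v)\le m-2$ must be initially coloured: if such a $v$ were uncoloured, then in the supposedly unique extension $\phi$ the neighbours of $v$ occupy at most $m-2$ colours, leaving at least two colours free at $v$, and recolouring $v$ to a second free colour produces a different extension of the same initial colouring. This is the converse half of Theorem~\ref{thm1}. In $H$ the vertices of degree at most $m-2$ are precisely the path vertices lying outside the clique (degree $1$ or $2$), so every uncoloured vertex lies in $V(K_m)$. Next, two uncoloured vertices $y_p,y_q$ would be adjacent with $\deg=m-1=k-1$, which Lemma~\ref{lem2} forbids; hence at most one $y_i$ is uncoloured. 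Finally, if three clique vertices were uncoloured they would have to be $x_j$, $x_{j+1}$ and a single $y$, forming a triangle; the remaining $m-3$ coloured clique vertices use $m-3$ distinct colours, so all three lists lie inside the same set of three free colours, and each list has size at least $2$ (the only extra constraints are the at most one path neighbour of $x_j$ and of $x_{j+1}$). After relabelling the free colours as $1,2,3$, Lemma~\ref{lem4} applied to this $C_3$ yields two extensions, a contradiction. Therefore at most two vertices are uncoloured, i.e. $sn(H)\ge n+m-4$.

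For the upper bound I would exhibit a colouring meeting the bound. For $n\ge 3$ at least one endpoint of the clique edge, say $x_j$, has a path neighbour $x_{j-1}\notin V(K_m)$. Colour the clique vertices $x_{j+1},y_2,\dots,y_{m-2}$ with the distinct colours $3,4,\dots,m$, leave $x_j$ and $y_1$ uncoloured, and colour the rest of the path properly (using only colours $1,2,3$) so that $x_{j-1}$ receives colour $1$. The coloured clique vertices then use $m-2$ distinct colours, forcing the pair $\{x_j,y_1\}$ onto the two remaining colours $\{1,2\}$; the constraint $x_j\ne x_{j-1}=1$ forces $x_j=2$ and hence $y_1=1$, so the extension is unique. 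Exactly $n+m-4$ vertices are coloured, giving $sn(H)\le n+m-4$ and therefore equality.

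The hard part will be the triangle step of the lower bound: one must check that after deleting the $m-3$ colours used on the coloured clique vertices the three surviving lists genuinely lie in a common three-element palette, and that subtracting the single external path colour from each of $x_j$ and $x_{j+1}$ still leaves lists of size at least two, so that Lemma~\ref{lem4} really applies; the rest is routine degree bookkeeping. As for the accompanying identity, it records the boundary instance meeting the motivating programme, where one wants a supergraph with $\chi(H)=\chi(P_n)+1=3$, namely $K_3$ attached to an edge rather than $K_m$. There the simplification $sn(P_n,m)=n$ is no longer available, and instead the same two-sided argument -- Lemmas~\ref{lem3} and~\ref{lem4} for the lower bound together with the explicit alternating colouring of Theorem~\ref{thm2}(i) for the upper bound -- yields $sn(H)=sn(P_n,3)$, so that the $3$-Sudoku number of the path is recovered as an ordinary Sudoku number of a supergraph.
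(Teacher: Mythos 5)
Your treatment of the main case $m\geq 4$ is correct and follows essentially the paper's route: the reduction $sn(P_n,m)=n$ via Theorem~\ref{thm1}; an upper-bound colouring that leaves exactly two adjacent clique vertices uncoloured, one of which has a coloured path neighbour outside the clique (the paper leaves $u_i,u_{i+1}$ uncoloured and colours all the added vertices, you leave $x_j$ and $y_1$ --- an immaterial variant); and, for the lower bound, the observation that every path vertex has degree at most $2\leq m-2$ and so must be initially coloured, confining all uncoloured vertices to the clique. Where you genuinely improve on the paper is the finish of the lower bound. The paper stops at ``at least three clique vertices are uncoloured, each with at least two colours in its list, hence at least two of them are not uniquely extendable,'' which is not by itself a proof of non-uniqueness. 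Your case split --- two uncoloured $y$'s are adjacent with degree $m-1=k-1$, excluded by Lemma~\ref{lem2}; the remaining possibility is the triangle $x_j,x_{j+1},y$, whose lists after deleting the $m-3$ colours of the coloured clique vertices live in a common $3$-element palette with each list of size at least $2$, excluded by Lemma~\ref{lem4} --- supplies exactly the missing argument. (Both your construction and the paper's implicitly need the clique edge to have a path neighbour outside the clique, i.e.\ $n\geq 3$; for $n=2$ the graph is $K_m$ and the formula fails, but this defect is shared with the paper.)

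The genuine gap is the final clause $sn(H)=sn(P_n,3)$, which is the $m=3$ case and which the paper proves as a separate Case~II. You only gesture at it: ``the same two-sided argument \dots\ yields $sn(H)=sn(P_n,3)$.'' The upper bound does follow from the alternating colouring of Theorem~\ref{thm2}(i), since any proper extension in $H$ restricts to the unique extension in $P_n$ and then the apex of the triangle is forced. But the lower bound $sn(H)\geq\lceil (n+1)/2\rceil$ is not a formal consequence of Lemmas~\ref{lem3} and~\ref{lem4}; it needs an explicit pigeonhole showing that any initial set of fewer than $\lceil(n+1)/2\rceil$ vertices must leave either an uncoloured edge of $P_n$ with both endpoints of degree at most $2$ (then apply Lemma~\ref{lem2}) or the whole triangle $u_i,u_{i+1},v_3$ uncoloured (then apply Lemma~\ref{lem4}). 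That counting over a suitable partition of $V(H)$ into the triangle and disjoint low-degree edges is the actual content of the paper's Case~II, and naming the lemmas does not replace carrying it out.
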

\begin{proof}
	Let $u_1, u_2, \dots, u_n$ be the vertices of path $P_n$.  Let $H$ be the graph obtained by attaching a complete graph $K_m$ to the edge $u_iu_{i+1}$ of $P_n$ for a fixed $i \in \{1,2, \dots, n-1\}$ and let the vertices of $K_m$ are $u_i,u_{i+1},v_3,v_4,v_5, \dots v_m$. \\
	
	Case I : When $m \geq 4$,\\
 Let $C$ be an initial coloring of $H$ as follows. \\
	
	$C(v_j)=j $ for $j= 3,4, \dots, m\\$\\
	$C(u_{i-1})=1$ ; $C(u_{i+2})=2$. \\
	
	Color all the other vertices except $u_i$ and $u_{i+1}$ with colors $1$ and $2$ alternately.  Then $u_i$ is forced to receive color $2$ and $u_{i+1}$ is forced to receive color $1$.  Thus, $C$ is a uniquely extendable coloring of $H$ with $n+m-4$ initially colored vertices.  Hence, $sn(H) \leq n+m-4$. \\
	
	If possible assume that there exists an extendable coloring $C$ of $H[S]$, where $S$ is a collection of vertices of $H$ with $\lvert S \rvert \leq n+m-5$.  All the $(n-2)$ vertices of $P_n$ whose degree $\leq 2$, must be initially colored.  Otherwise, there will be at least two colors in their color list and hence $C$ will not be a Sudoku coloring.  That means, atmost $m-3$ vertices of $K_m$ are initially colored.  So at least three vertices of $K_m$ are not yet colored and hence contains at least two colors in their color list.  Therefore, at least two vertices among them are not u. c. e vertices and thus $C$ is not a Sudoku coloring.  Hence,  $sn(H)=n+m-4$. \\
	
	Case II : When $m=3$\\
	
	Let $C$ be an initial coloring of $H$ as follows. \\
	
	When $n$ is odd, color the vertices $u_1,u_3,\dots, u_n$ with color $1$ and color $2$ alternately.  When $n$ is even, color the vertices $u_1,u_3, \dots,  u_{n-1}$ with color $1$ and color $2$ alternately and $u_n$ with color $3$.  Then $C$ is a uniquely extendable coloring of $H$ and hence $sn(H) \leq \lceil\frac{n+1}{2} \rceil$. \\
	
	If possible, let $sn(H) \leq \lceil\frac{n+1}{2} \rceil -1$.  Then, by pigeonhole principle, either there exists an edge $xy$ for which $x,y \notin S$ such that $deg(x) \leq 2$ and $deg(y)\leq 2$ or the three vertices $u_i,u_{i+1},v_3$ of $K_3$ are not initially colored.  In both cases, $C$ is not a Sudoku coloring.  Hence, $sn(H)=\lceil\frac{n+1}{2} \rceil=sn(P_n,3)$. 
\end{proof}
Figure~\ref{f:P6K5} gives the Sudoku coloring of the graph $H$ obtained by attaching $K_5$ to an edge of $P_6$ with 7 initially colored vertices and its unique extension. \\
\begin{figure}[h]
	\centering
	\includegraphics[width=14cm]{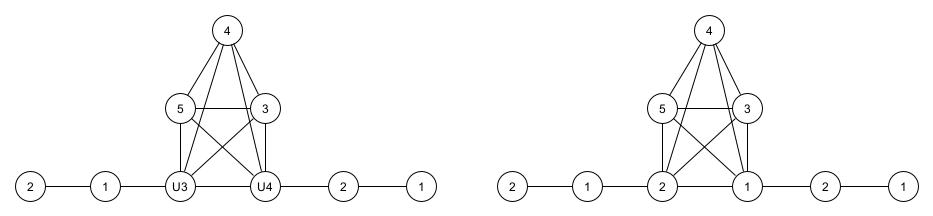}
	\caption{Sudoku coloring of the graph $H$ obtained by attaching $K_5$ to an edge of $P_6$ and its final coloring}
	\label{f:P6K5}
\end{figure}
Figure~\ref{f:P6K3} gives the Sudoku coloring of the graph $H$ obtained by attaching $K_3$ to an edge of $P_6$ with 4 initially colored vertices and its unique extension. \\
\begin{figure}[h]
	\centering
	\includegraphics[width=14cm]{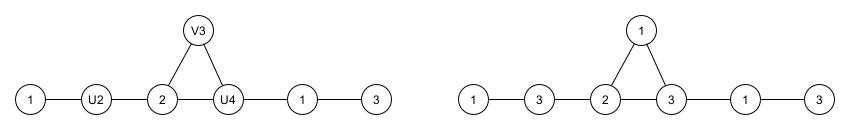}
	\caption{Sudoku coloring of the graph $H$ obtained by attaching $K_3$ to an edge of $P_6$ and its final coloring}
	\label{f:P6K3}
\end{figure}
\begin{thm}
	 Let $H$ be the graph obtained from a bipartite graph $G$ of order $n$ by attaching a complete graph $K_m$ on any edge of $G$.  Then, $sn(H) \leq sn(G,m) +m-3$. 
\end{thm}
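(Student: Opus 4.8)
The plan is to prove the bound by exhibiting one explicit $m$-Sudoku coloring of $H$ that pre-colors exactly $sn(G,m)+m-3$ vertices, since producing such a coloring immediately gives the claimed upper bound. First I would pin down the relevant number of colors: because $G$ is bipartite and we glue a $K_m$ onto a single edge, $\chi(H)=m$ (the clique forces $\chi(H)\geq m$, while two-coloring $G$ and giving the $m-2$ new clique vertices the remaining colors is a proper $m$-coloring). Hence $sn(H)=sn(H,m)$, and it suffices to build a good $m$-coloring.

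Fix the edge $uv$ of $G$ carrying the clique, and let $v_3,\dots,v_m$ be the $m-2$ new vertices, so that $\{u,v,v_3,\dots,v_m\}$ induces $K_m$ in $H$. I would take an optimal $m$-Sudoku coloring $C_0$ of $G$ on a set $S$ with $|S|=sn(G,m)$, and let $\phi$ denote its unique extension to $G$. Writing $a=\phi(u)$ and $b=\phi(v)$ (distinct, since $uv$ is an edge), I would then color $m-3$ of the new vertices, say $v_3,\dots,v_{m-1}$, using $m-3$ distinct colors drawn from the $m-2$ colors in $\{1,\dots,m\}\setminus\{a,b\}$, leaving $v_m$ uncolored. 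The resulting partial coloring $C$ of $H$ uses $|S|+(m-3)=sn(G,m)+m-3$ vertices, and for $m=3$ this simply colors no new vertex.

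The heart of the argument is to check that $C$ extends \emph{uniquely}. For existence I would extend $C$ by $\phi$ on $V(G)$ and give $v_m$ the single leftover color; this is proper, as $\phi$ is proper on $G$ and the clique then carries all $m$ colors exactly once. For uniqueness I would argue by restriction: any proper $m$-coloring of $H$ extending $C$ restricts to a proper $m$-coloring of $G$ agreeing with $C_0$ on $S$, hence equals $\phi$ by the uniqueness built into $C_0$; in particular the colors of $u,v,v_3,\dots,v_{m-1}$ are all forced, so $v_m$, being adjacent in $K_m$ to these $m-1$ vertices which already exhibit $m-1$ distinct colors, is forced to the unique remaining color. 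This yields the unique extension and completes the construction.

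The step I expect to need the most care is ensuring compatibility between the colors I assign to the new vertices and the colors $a,b$ that $u,v$ are forced to receive, since $a,b$ are not known until $C_0$ has been extended. This is resolved by reading $a,b$ off from $\phi$ \emph{first} and only then selecting the colors of $v_3,\dots,v_{m-1}$ from the complement $\{1,\dots,m\}\setminus\{a,b\}$. One should also confirm that attaching the clique cannot create a second valid coloring of $G$ itself, which holds because restriction to a subgraph can only discard, never create, admissible colorings. Since no matching lower bound is asserted, the entire proof reduces to this single construction.
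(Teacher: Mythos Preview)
Your proposal is correct and follows essentially the same approach as the paper: take an optimal $m$-Sudoku coloring of $G$, color $m-3$ of the new clique vertices with colors avoiding those of $u$ and $v$, and argue that the unique extension on $G$ together with the clique constraint forces the last new vertex. You are in fact more careful than the paper on two points: you explicitly verify $\chi(H)=m$, and you read the forbidden colors $a,b$ from the unique extension $\phi$ rather than from the partial coloring $C_0$ (the paper's wording ``colors received by $u$ and $v$ in the initial coloring $C$'' tacitly assumes $u,v\in S$, which need not hold).
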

\begin{proof}
	Let the complete graph $K_m$ be attached to the edge $uv$ of the bipartite graph $G$ of order $n$ to obtain $H$.  Since, $sn(G,m)$ is the $m$- Sudoku number of G, the initial coloring $C$ of $G$ with $sn(G,m)$ vertices will be uniquely extended to a proper $m$-coloring of $G$.  Color $m-3$ vertices of $K_m$ other than $u$ and $v$ with $m-3$ different colors other than the colors received by $u$ and $v$ in the initial coloring $C$ of $G$.  This will lead to a unique extension to proper coloring of $H$.  Hence  $sn(H)\leq sn(G,m)+m-3$. 
\end{proof}
{\bf Note:} There are graphs which satisfy strict inequality for the above result.  Examples are paths the value of which is proved as \ref{th2} above. 
\begin{thm}
	 If $ \chi(G)= \omega(G)$, and $H$ is the graph obtained from $G$ by adding a new vertex $w$ which is adjacent to all vertices of the clique of size $ \omega(G)$, then $sn(H)= sn(G,\chi(G)+1)$. 
\end{thm}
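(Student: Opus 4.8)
Throughout write $k:=\chi(G)=\omega(G)$, let $Q$ be the $k$-clique to which $w$ is joined, and recall $V(H)=V(G)\cup\{w\}$ with $w$ adjacent exactly to $Q$. First I would pin down the number of colors. Since $Q\cup\{w\}$ induces a clique on $k+1$ vertices, $\chi(H)\ge\omega(H)\ge k+1$; on the other hand any proper $k$-coloring of $G$ extends to $H$ by giving $w$ a brand new color, so $\chi(H)\le k+1$. Hence $\chi(H)=\omega(H)=k+1=\chi(G)+1$, which is precisely why $sn(H)$ (taken with $\chi(H)=k+1$ colors) is comparable with $sn(G,\chi(G)+1)=sn(G,k+1)$. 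The engine of the whole proof is the following observation: in \emph{every} proper $(k+1)$-coloring of $H$ the $k$-clique $Q$ already uses $k$ distinct colors, so $w$, being adjacent to all of $Q$, is forced to receive the unique color missing on $Q$. Consequently restriction to $V(G)$ is a bijection between the proper $(k+1)$-colorings of $H$ and those of $G$, its inverse being ``color $G$, then put the missing color on $w$''. Every step below is bookkeeping around this bijection.

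For $sn(H)\le sn(G,k+1)$ I would take a minimum $(k+1)$-Sudoku coloring $C_0$ of $G$, say on $S'$, so $w\notin S'$ and $H[S']=G[S']$. Any extension of $C_0$ to a proper $(k+1)$-coloring of $H$ restricts to a proper $(k+1)$-coloring of $G$ agreeing with $C_0$ on $S'$, which by hypothesis must be the unique $G$-extension $f^{*}$; then $w$ is forced. So $C_0$ extends uniquely in $H$, giving $sn(H)\le|S'|=sn(G,k+1)$.

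For $sn(H)\ge sn(G,k+1)$, let $C_0$ be a minimum Sudoku coloring of $H$ on $S$, with unique extension $F$. If $w\notin S$, I restrict $C_0$ to $G$: any two distinct extensions of $C_0$ in $G$ would, after forcing $w$, yield two distinct extensions of $C_0$ in $H$, contradicting uniqueness of $F$; hence $C_0$ extends uniquely in $G$ and $sn(G,k+1)\le|S|$. The substantive case is $w\in S$, say $C_0(w)=c$. Writing $S_G=S\setminus\{w\}$, the natural move is to pass to $D:=F|_{S_G\cup Q}$, which pins $Q$ to its $F$-colors; these avoid $c$, so every $G$-extension of $D$ has $Q$ missing $c$ and therefore corresponds, via the bijection, to an $H$-extension of $C_0$. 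Uniqueness of $F$ then forces uniqueness of the $G$-extension of $D$, so $D$ is a $(k+1)$-Sudoku coloring of $G$ of size $|S|-1+|Q\setminus S|$.

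The hard part is exactly here: this construction delivers the desired bound $sn(G,k+1)\le|S|$ only when $|Q\setminus S|\le 1$; if the optimal $H$-coloring leaves two or more clique vertices outside $S$, pinning all of $Q$ overshoots (one can exhibit a small instance with $k=3$ where $D$ has size $|S|+1$ although $G$ admits a $(k+1)$-Sudoku coloring of size $|S|$). So the crux is to prove the sharper statement that a minimum Sudoku coloring of $H$ may always be taken with $w\notin S$, after which the first case above finishes the argument. Care is needed because $w$ is always a \emph{forced} vertex yet pinning a forced vertex can genuinely help in a \emph{fixed} partial coloring (removing $w$ from a non-optimal $S$ may destroy uniqueness), so what must be shown is that at minimum size this help is redundant. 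My plan is to argue by minimality: if $w\in S$ is optimal then $C_0|_{S_G}$ cannot already extend uniquely in $G$ (otherwise Step~2 would beat $sn(H)$), so the ambiguity of $C_0|_{S_G}$ in $G$ is caused solely by colorings that place $c$ on $Q$; I would then re-solve, replacing the single datum ``$w=c$'' by clique or neighbor data of no greater total size, to produce an equal-size Sudoku coloring of $H$ with $w\notin S$. Establishing this exchange without a net increase in $|S|$ is the one genuinely delicate point of the proof.
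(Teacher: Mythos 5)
Your upper bound, the $w\notin S$ case of the lower bound, and the underlying bijection between proper $(k+1)$-colorings of $H$ and of $G$ all coincide with the paper's argument. The genuine gap is exactly where you flag it: the case $w\in S$. Your construction $D=F|_{S_G\cup Q}$ has size $|S|-1+|Q\setminus S|$ and therefore yields $sn(G,k+1)\le sn(H)$ only when at most one clique vertex lies outside $S$; for $|Q\setminus S|\ge 2$ you offer a plan (``re-solve, replacing the single datum $w=c$ by clique or neighbour data of no greater total size'') rather than an argument, and you concede as much. As written the proof is therefore incomplete. (Note in passing that the subcase $Q\subseteq S$ cannot occur for a minimum $S$ containing $w$: the colors on $Q$ then force $w$ in every extension, so deleting $w$ from $S$ would leave a Sudoku coloring of $H$ of size $|S|-1$.)

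The paper closes this case by a different, one-vertex exchange: pick a single uncolored clique vertex $u\in Q\setminus S$ (one exists by the minimality remark above) and replace the datum $C_0(w)=c$ by the datum $u\mapsto c$, giving a partial coloring of $G$ of size exactly $|S|$. Since $u$ is adjacent to every other vertex of $Q$, assigning it $c$ forbids $c$ on $Q\setminus\{u\}$ just as $w$ did, and the paper asserts that this partial coloring again extends uniquely in $G$. That assertion is the step you would still need to verify if you adopted this route --- the new partial coloring is not the restriction of $F$, since $F(u)\ne c$, and one must also check properness against neighbours of $u$ in $G$ outside $Q$ that might carry color $c$ --- but it is a concrete exchange that keeps the size at $|S|$, rather than the open-ended re-solving you propose. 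Either you carry out that exchange and justify its unique extendability, or you must actually prove your sharper claim that a minimum Sudoku coloring of $H$ can always be chosen with $w\notin S$; at present neither is done.
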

\begin{proof}
	Let $w$ be the new vertex joined to all vertices of the clique of size $ \omega (G)$.  Since, $sn(G, \chi(G)+1)$ is the $[\chi(G)+1]$- Sudoku number of $G$, the initial coloring $C$ of $G$ with $sn(G,\chi(G)+1)$ vertices will be uniquely extended to a proper $(\chi(G)+1)$ coloring of $G$.  Then $w$ is forced to receive the color other than the colors received by the vertices of the clique.  So $sn(H)\leq sn(G,\chi(G)+1)$. \\
	
	Let $C$ be an initial coloring of $H[S]$ where $S$ is a collection of vertices of $H$ with $\lvert S \rvert = sn(H)$.  If $w\notin S$, all the initially colored vertices are in $G$ and the coloring $C$ can be uniquely extended.  So $sn(G,\chi(G)+1) \leq sn(H)$. 
	If $w \in S$, then atmost $\omega(G)$ vertices of the clique are not initially colored.  Choose any one of those uncolored vertices in the graph $H-w$, color that vertex with the color given to $w$ in the initial coloring of $H[S]$.  This will lead to a unique extension to a proper $(\chi(G)+1)$-coloring of $G$.  So,  $sn(G,\chi(G)+1)\leq sn(H)$.  Hence, $sn(H) = sn(G,\chi(G)+1)$. 
	
\end{proof}
\begin{cor}
	Let $H$ be the graph obtained from a bipartite graph $G$ by adding a new vertex $w$ and joining to two adjacent vertices of $G$.  Then,
	$sn(H)=sn(G,3)$. 
\end{cor}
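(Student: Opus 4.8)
The plan is to obtain this as an immediate specialization of the preceding theorem, so the real work lies in verifying that its hypothesis $\chi(G)=\omega(G)$ is met and that the construction described here coincides with the one appearing there. First I would observe that, since $H$ is formed by joining $w$ to \emph{two adjacent} vertices of $G$, the graph $G$ must contain at least one edge; hence $\chi(G)=2$. Being bipartite, $G$ is triangle-free, so its largest clique is a single edge and $\omega(G)=2$. Therefore $\chi(G)=\omega(G)=2$, and the hypothesis of the theorem is satisfied.

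Next I would identify the clique of size $\omega(G)$ with the pair of adjacent vertices to which $w$ is attached. A clique of size $2$ is precisely an edge, that is, two adjacent vertices, so the phrases ``adjacent to all vertices of the clique of size $\omega(G)$'' and ``joined to two adjacent vertices of $G$'' describe the same graph $H$. With this identification in place, the two formulations of the construction agree, and no separate argument about the structure of $H$ is needed.

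With both the hypothesis and the construction confirmed, the theorem gives $sn(H)=sn(G,\chi(G)+1)$, and substituting $\chi(G)=2$ yields $sn(H)=sn(G,3)$, as claimed. I expect no genuine obstacle here; the only point that requires any care is the degenerate case in which $G$ has no edge (so that $\chi(G)=1$ and $\omega(G)=1$), but this case is automatically excluded by the very requirement that $w$ be joined to two adjacent vertices. Hence the corollary follows directly.
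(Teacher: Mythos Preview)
Your proposal is correct and follows exactly the paper's approach: the paper's own proof is literally ``Same argument as above,'' i.e., a direct specialization of the preceding theorem, which is precisely what you do by checking $\chi(G)=\omega(G)=2$ and identifying the edge with the size-$2$ clique. Your added remark excluding the edgeless case is a nice bit of care that the paper leaves implicit.
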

\begin{proof}
	Same argument as above. 
	
\end{proof}
\begin{thm}
	Given a graph $G_1$ with $\chi(G_1)=k$, there exist graphs $G_2$ and $G_3$ such that $G_i$ is an induced subgraph of $G_j$ for $i\leq j$ with $\chi(G_2)=k$, $\chi(G_3)=k+1$ and $sn(G_3)=sn(G_2,k+1)$. 
\end{thm}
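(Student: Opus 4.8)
The plan is to reduce this to the earlier theorem asserting that if $\chi(G)=\omega(G)$ and $H$ is obtained from $G$ by adjoining a vertex $w$ adjacent to all vertices of an $\omega(G)$-clique, then $sn(H)=sn(G,\chi(G)+1)$. The one hypothesis I cannot take for granted is $\chi=\omega$: the given $G_1$ has chromatic number $k$ but may satisfy $\omega(G_1)<k$ (for instance an odd cycle when $k=3$). So the construction hinges on first enlarging $G_1$ to a graph $G_2$ that contains $G_1$ as an induced subgraph, still has chromatic number $k$, but additionally has clique number exactly $k$.

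First I would build $G_2$. Assuming $k\geq 2$, take a disjoint copy of $K_k$ on fresh vertices and join it to $G_1$ by a single bridge edge $ab$, where $a$ is a vertex of the new clique and $b$ is any vertex of $G_1$; call the result $G_2$. Since no edge is added inside $V(G_1)$, the subgraph of $G_2$ induced on $V(G_1)$ is exactly $G_1$, so $G_1$ is an induced subgraph of $G_2$. For the chromatic number, $\chi(G_2)\geq\chi(G_1)=k$ because $G_1$ is induced, while a proper $k$-coloring exists: color $G_1$ with $k$ colors, color the new clique with $k$ colors, and (permuting the colors on the clique, which stays proper) arrange $c(a)\neq c(b)$; hence $\chi(G_2)=k$. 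For the clique number, any clique of $G_2$ lies entirely in $G_1$ (size $\leq\omega(G_1)\leq k$), or entirely in the new $K_k$ (size $\leq k$), or meets the bridge; in the last case a further mutually adjacent vertex would have to be adjacent to both endpoints of $ab$, which is impossible since $ab$ is the only edge crossing between the two parts, so such a clique is just $\{a,b\}$. Thus $\omega(G_2)=k=\chi(G_2)$. When $k=1$, the connected graph $G_1$ is $K_1$, which already satisfies $\omega=\chi=1$, so I simply set $G_2=G_1$.

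Next I would apply the earlier theorem verbatim to $G_2$ with its distinguished $k$-clique: let $G_3$ be obtained from $G_2$ by adding a vertex $w$ adjacent to all $k$ vertices of that clique. The theorem then gives $sn(G_3)=sn(G_2,\chi(G_2)+1)=sn(G_2,k+1)$, which is the required identity. It remains to record the routine structural facts: $G_2$ is an induced subgraph of $G_3$ since $G_3-w=G_2$; $G_1$ is an induced subgraph of $G_3$ by transitivity; and $\chi(G_3)=k+1$, because the $k$-clique together with $w$ forms a $K_{k+1}$, giving $\chi(G_3)\geq k+1$, while coloring $G_2$ with $k$ colors and $w$ with a new color gives $\chi(G_3)\leq k+1$.

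The main obstacle is located entirely in the first step, namely securing $\chi(G_2)=\omega(G_2)=k$ at once. The two delicate points are verifying that the bridge does not raise the chromatic number above $k$ (handled by permuting the colors on the attached clique) and does not create a clique larger than $k$ (handled by noting that exactly one edge crosses the bridge). Once $G_2$ is in hand, the conclusion is a direct invocation of the previously proved theorem together with the bookkeeping on induced subgraphs and chromatic numbers.
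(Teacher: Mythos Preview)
Your proof is correct and takes a genuinely different route from the paper. The paper sets $G_2 = G_1 \circ kK_1$ (the corona attaching $k$ pendant vertices to every vertex of $G_1$) and obtains $G_3$ by adding a vertex $w$ adjacent to \emph{every} vertex of $G_1$; it then computes both $sn(G_2,k+1)$ and $sn(G_3)$ directly, showing each equals $nk$ by arguing that all $nk$ pendants must be pre-colored and exhibiting an initial coloring of the pendants that forces everything else. Your construction instead bolts a $K_k$ onto $G_1$ by a single bridge to arrange $\omega(G_2)=\chi(G_2)=k$, and then invokes the earlier $\chi=\omega$ theorem as a black box to produce $G_3$. The paper's approach is self-contained (it does not rely on the $\chi=\omega$ result) and yields explicit values of the two Sudoku numbers, but as written it only treats $k\geq 3$; your reduction is shorter once the earlier theorem is in hand, makes the logical dependence on that theorem transparent, and covers all $k\geq 1$ uniformly.
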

\begin{proof}
	The proof is by construction. \\
	Let $G_1$ be a graph with $n$ vertices and $\chi(G_1)=k\geq3$.  Let $G_2$ be the graph $G_1\circ kK_1$ and $G_3$ be the graph obtained from $G_2$ by adding a new vertex $w$ which is adjacent to all vertices of $G_1$.  Then $\chi(G_2)=k$ and $\chi(G_3)=k+1$.  Since $k \geq 3$, all the $nk$ pendant vertices of $G_2$ and $G_3$ must be initially colored.  Hence, $sn(G_2,k+1) \geq nk$ and $sn(G_3) \geq nk$.  Let $C$ be an initial coloring of $G_2$ and $G_3$ as follows. \\
	Color the $k$ pendant vertices of each vertex $v$ of $G_1$ with $k$ colors other than the color given to $v$ in the proper $k$-coloring of $G_1$.  This will lead to a unique $k+1$-coloring of $G_2$ and $G_3$.  Hence, $sn(G_3)=sn(G_2,k+1)=nk$. 
\end{proof}
{\bf Open Question:} Given a graph $G$ with $\chi(G)=k$.  Can we embed $G$ in another graph $H$ such that $\chi(H)=k+1$ and $sn(H)=sn(G,k+1)$?\\

\end{document}